\newcommand{\om}{\omega}
\newcommand{\na}{\nabla}
\newcommand{\p}{\partial}
\def\om{\omega}
\def\na{\nabla}
\def\om{\omega}
\def\na{\nabla}
\newtheorem{theorem}{\noindent Theorem}[section]
\newtheorem{lemma}{\noindent Lemma}[section]
\newtheorem{proposition}{\noindent Proposition}[section]
\newcommand{\beq}{\begin{equation}}
\newcommand{\eeq}{\end{equation}}
\newcommand{\ben}{\begin{eqnarray}}
\newcommand{\een}{\end{eqnarray}}
\newcommand{\beno}{\begin{eqnarray*}}
\newcommand{\eeno}{\end{eqnarray*}}
\numberwithin{equation}{section}
\subjclass[2020]{35A01, 35B35, 35B65, 76D03, 76E25}
\keywords{3D magnetohydrodynamic equations, mixed dissipation, global smooth solutions}
\begin{document}
	
\title[The 3D MHD equations with mixed dissipation]{Global solutions to 3D incompressible MHD system
	with dissipation in only one direction}

\author[Hongxia Lin, Jiahong Wu and Yi Zhu]{Hongxia Lin$^{1}$, Jiahong Wu$^{2}$ and Yi Zhu$^{3}$}

\address{$^1$ College of Mathematics and Physics,  and Geomathematics Key Laboratory of Sichuan Province, Chengdu University of Technology, Chengdu, 610059, P. R. China}

\email{linhongxia18@126.com}

\address{$^2$ Department of Mathematics, Oklahoma State University, Stillwater, OK 74078, United States}

\email{jiahong.wu@okstate.edu}

\address{$^3$ Department of Mathematics, East China University of Science and Technology, Shanghai 200237,  P.R. China}

\email{zhuyim@ecust.edu.cn}

\vskip .2in
\begin{abstract}
The small data global well-posedness of the 3D incompressible Navier-Stokes
equations in $\mathbb R^3$ with only one-directional dissipation remains an outstanding open problem. The
dissipation in just one direction, say $\p_1^2 u$ is simply insufficient in controlling the nonlinearity
in the whole space $\mathbb R^3$.
The beautiful work of Paicu and Zhang \cite{ZHANG1} solved the case when 
the spatial domain is bounded in the $x_1$-direction by observing
a crucial Poincar\'{e} type inequality. Motivated by this Navier-Stokes open problem 
and by experimental observations on the stabilizing effects of
background magnetic fields, this paper intends to understand the global well-posedness 
and stability of a special 3D magnetohydrodynamic (MHD) system near a background magnetic field. 
The spatial domain is $\mathbb R^3$ and the velocity in this MHD system obeys
the 3D Navier-Stokes with only one-directional dissipation. With no Poincar\'{e} type inequality, this problem appears to be impossible. By discovering the mathematical 
mechanism of the experimentally observed stabilizing effect and introducing several innovative techniques to deal with the derivative loss difficulties, we are able to bound the Navier-Stokes nonlinearity and solve the desired global well-posedness and stability problem.
\end{abstract}
\maketitle

\section{Introduction}

\vskip .1in
This paper focuses on a special 3D anisotropic magnetohydrodynamic (MHD) system. The velocity field obeys
the 3D Navier-Stokes equation with one-directional dissipation while the magnetic field satisfies the induction equation  with two-directional magnetic diffusion. More precisely, the MHD system concerned here reads
\begin{equation}\label{mhd0}
	\begin{cases}
		\p_t u + u \cdot \nabla u - \partial_1^2 u  + \nabla P = B \cdot \nabla B, \qquad  x \in \mathbb{R}^3, t > 0,\\
		\p_t B + u \cdot \nabla B - \partial_1^2 B-\partial_2^2 B = B \cdot \nabla u, \\
		\nabla \cdot u = \nabla \cdot B = 0,\\
		u(x,0) =u_0(x), \quad B(x,0) =B_0(x),
	\end{cases}
\end{equation}
where $u=(u_1,u_2,u_3)^T$ represents the velocity field, $P$ the total pressure and $B=(B_1,B_2,B_3)^T$
the magnetic field.  The MHD equations reflect
the basic physics laws governing the motion of electrically conducting fluids such as plasmas, liquid metals
and electrolytes. They are a combination of the Navier-Stokes equation of fluid dynamics
and Maxwell's equation of electromagnetism (see, e.g., \cite{Bis,Davi, Pri}). The MHD system (\ref{mhd0})
focused here is relevant in the modeling of reconnecting plasmas (see, e.g., \cite{CLi1,CLi2}). The Navier-Stokes equation with anisotropic viscous dissipation arise in several physical circumstances.
It can model the turbulent diffusion of rotating fluids in Ekman layers. More details 
on the physical backgrounds
of anisotropic fluids can be found in \cite{CDGG,Ped}.

\vskip .1in
The goal here is to establish the global well-posedness and stability near a background magnetic field. More precisely, the background magnetic field refers to the special
steady-state solution $(u^{(0)}, B^{(0)})$, where
$$
u^{(0)} \equiv 0, \quad B^{(0)} \equiv (0,1, 0) := e_2.
$$
Any perturbation $(u, b)$ near $(u^{(0)}, B^{(0)})$ with
$$
b= B- B^{(0)}
$$
is governed by
\begin{equation}\label{mhd}
	\begin{cases}
		\p_t u + u \cdot \nabla u - \partial_1^2 u + \nabla P = b \cdot \nabla b + \partial_2 b, \quad x \in \mathbb{R}^3, t > 0,\\
		\p_t b + u \cdot \nabla b - \Delta_h b = b \cdot \nabla u + \partial_2 u, \\
		\nabla \cdot u = \nabla \cdot b = 0,\\
		u(x,0) =u_0(x), \quad b(x,0) =b_0(x),
	\end{cases}
\end{equation}
where, for notational convenience, we have written $\Delta_h = \partial_1^2 + \partial_2^2 $
and we shall also write $\na_h =(\p_1, \p_2)$. Our motivation for this study comes from two sources.
The first is to
gain a better understanding on the well-posedness problem on the 3D Navier-Stokes equation with dissipation in only one direction. The second is to reveal and rigorously establish the stabilizing phenomenon exhibited by
electrically conducting fluids. Extensive physical experiments and numerical simulations have been performed
to understand the influence of the magnetic field on the bulk turbulence
involving various electrically conducting fluids such as liquid metals (see, e.g., \cite{AMS,Alex,Alf,BSS,Bur, Davi0,Davi1,Davi,Gall,Gall2}).  These experiments
and simulations have observed a remarkable phenomenon that a background magnetic field can smooth and stabilize
turbulent electrically conducting fluids. We intend to establish these observations as mathematically
rigorous facts on the system (\ref{mhd}).

\vskip .1in
Mathematically the problem we are attempting appears to be impossible. 
The velocity satisfies the 3D incompressible forced Navier-Stokes equations with dissipation in 
only one direction
$$
	\p_t u + u \cdot \nabla u - \partial_1^2 u + \nabla P = b \cdot \nabla b + \partial_2 b, \quad x \in \mathbb{R}^3,\,\, t>0.
$$
However, when the spatial domain is the whole space $\mathbb{R}^3$, the small data global well-posedness of the 3D  Navier-Stokes equations with only one-directional dissipation,
\beq\label{ns}
\begin{cases}
\p_t u + u\cdot\na u = -\na p + \p_1^2 u, \quad x\in \mathbb R^3, \,\,t>0, \\
\na\cdot u=0, \\
u(x,0) =u_0(x)
\end{cases}
\eeq
remains an outstanding open problem. The difficulty is immediate. 
The dissipation in one direction is simply not sufficient in controlling the nonlinearity when the 
spatial domain is the whole space $\mathbb R^3$. 

\vskip .1in 
In a beautiful work \cite{ZHANG1}, Paicu and Zhang  were able to deal with the case when 
the spatial domain is bounded in the $x_1$-direction with Dirichlet type boundary conditions. 
They successfully established the small data global well-posedness by observing a crucial 
Poincar\'{e} type inequality. This inequality allows one to bound $u$ in terms of $\p_1 u$ and thus leads
to the control of the nonlinearity. However, such Poincar\'{e} type inequalities are not valid for the whole space case. 

\vskip .1in
If we increase the dissipation to be in two-directions, say
$$
\begin{cases}
	\p_t u + u\cdot\na u = -\na p + (\p_1^2+ \p_2^2) u, \quad x\in \mathbb R^3, \,\,t>0, \\
	\na\cdot u=0,
\end{cases}
$$
then any sufficiently small initial data in a suitable Sobolev or Besov space always leads to a
global (not necessarily stable) solution. There are substantial developments 
on the 3D anisotropic Navier-Stokes equations with two-directional dissipation. 
Significant progress has been made on the global existence of
small solutions and on the regularity criteria on general large solutions in various Sobolev and Besov settings (see, e.g., \cite{CDGG,If,ZHANG2,ZHANG1, Pai2}).

\vskip .1in
We return to the well-posedness and stability problem proposed here. Clearly, if
the coupling with the magnetic field does not generate extra smoothing and stabilizing effect, 
then the problem focused here would be impossible. Fortunately, we discover in this paper that 
the magnetic field does help stabilize the conducting fluids, as observed by physical experiments and numerical simulations.
To unearth the stabilization effect, we take advantage of the coupling and intersection in the MHD system
to convert (\ref{mhd}) into the following wave equations
\begin{equation}\label{wav}
	\begin{cases}
\p_t^2 u - (\p_1^2 + \Delta_h) \p_t u + \p_1^2 \Delta_h u- \p_2^2 u= (\p_t -\Delta_h) N_1 + \p_2 N_2,\\
\p_t^2 b - (\p_1^2 + \Delta_h) \p_t b + \p_1^2 \Delta_h b- \p_2^2 b= (\p_t -\p_1^2) N_2 + \p_2 N_1,
	\end{cases}
\end{equation}
where $N_1$ and $N_2$ are the nonlinear terms
$$
N_1 = \mathbb P (-u\cdot\na u+ b\cdot \na b), \quad N_2 = -u\cdot\na b + b\cdot\na u
$$
with $\mathbb P=I -\na\Delta^{-1} \na\cdot$ being the Leray projection. (\ref{wav}) is derived by taking the
time derivative of (\ref{mhd}) and making several substitutions. In comparison with (\ref{mhd}), the wave structure in (\ref{wav}) exhibits much more regularity properties. In particular, the linearized wave equation of $u$,
$$
\p_t^2 u - (\p_1^2 + \Delta_h) \p_t u + \p_1^2 \Delta_h u- \p_2^2 u=0
$$
reveals the regularization of $u$ in the $x_2$-direction, although the original system (\ref{mhd}) involves only
the dissipation in the $x_1$-direction.

\vskip .1in
Unfortunately the extra regularization in the $x_2$-direction is not sufficient to control
the Navier-Stokes nonlinearity. The regularity from the wave structure is in general
one-derivative-order lower
than the standard dissipation. More precisely, when we seek solutions in the Sobolev
space $H^4(\mathbb R^3)$, the dissipation in the $x_1$-direction yields the time integrability term
\beq\label{jj}
\int_0^t \|\p_1 u(\tau)\|^2_{H^4}\,d\tau,
\eeq
but the extra regularity in the $x_2$-direction due to the background magnetic field and the coupling can only allow us to bound
\beq\label{jj1}
\int_0^t \|\p_2 u(\tau)\|^2_{H^3}\,d\tau.
\eeq
But (\ref{jj}) and (\ref{jj1}) may not be sufficient to control some of the terms resulting from the nonlinearity
in the estimate of $\|u\|_{H^4}$ such as
$$
\int_{\mathbb R^3} \p_3 u_3 \,\p_3^4 u_1\, \p_3^4 u_1\,dx.
$$
Naturally we use the divergence-free condition $\p_3 u_3 = -\p_1 u_1 -\p_2 u_2$ to eliminate the bad derivative
$\p_3$, but this process generates a new difficult term
$$
\int_{\mathbb R^3} \p_2 u_2 \,\p_3^4 u_1\, \p_3^4 u_1\,dx,
$$
which can not be bounded in terms of (\ref{jj1}). If we integrate by parts, we would have the fifth-order
derivatives on the velocity, which can not be controlled. We call this phenomenon the derivative loss
problem.   The above analysis reveals that the extra regularity
gained through the background magnetic field and the nonlinear coupling is not sufficient to deal with the
derivative loss problem.

\vskip .1in
This paper creates several new techniques to combat the derivative loss problem. As a consequence, we are
able to offer suitable upper bounds on the Navier-Stokes nonlinearity and solve the desired global well-posedness
and stability problem.  We state our main result and then  describe these techniques.

\begin{theorem} \label{main}
	Consider \eqref{mhd} with the initial datum $(u_0, b_0) \in H^4(\mathbb{R}^3)$
	and $\nabla \cdot u_0 = \nabla \cdot b_0 = 0$, Then there exists a constant $\epsilon>0$ such that, if
	\begin{equation*}\label{initialdata}
		\|u_0\|_{H^4} + \|b_0\|_{H^4} \leq \epsilon,
	\end{equation*}
	system \eqref{mhd} has a unique global classical solution $(u, b)$ satisfying, for any $t>0$,
	$$
	\|u(t)\|_{H^4}^2 + \|b(t)\|_{H^4}^2 + \int_0^t \left(\|\partial_1 u\|_{H^4}^2 + \|\na_h b\|^2_{H^4} + \|\p_2 u\|^2_{H^3}\right) \,d\tau \leq \;\epsilon.
	$$	
\end{theorem}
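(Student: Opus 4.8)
The plan is to run a bootstrap argument on the energy
\beq\label{energyE}
E(t) = \|u(t)\|_{H^4}^2 + \|b(t)\|_{H^4}^2 + \int_0^t\left(\|\p_1 u\|_{H^4}^2 + \|\na_h b\|_{H^4}^2 + \|\p_2 u\|_{H^3}^2\right)d\tau,
\eeq
which is exactly the quantity in the statement. Local well-posedness in $H^4$ is standard, so the whole problem reduces to the a priori estimate $E(t)\le C\,E(0) + C\,E(t)^{3/2}$; once this holds, the hypothesis $E(0)\lesssim\epsilon^2$ propagates for all time and yields both the global solution and the stated bound. The first two dissipative integrals are straightforward: applying $\p^\alpha$ with $|\alpha|\le 4$ to the two equations of (\ref{mhd}), pairing with $\p^\alpha u$ and $\p^\alpha b$ and summing, the background-field coupling terms $\p_2 b$ and $\p_2 u$ are skew-adjoint and cancel to leading order, while $-\p_1^2 u$ and $-\Delta_h b$ produce $\|\p_1 u\|_{H^4}^2$ and $\|\na_h b\|_{H^4}^2$.

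The third integral $\int_0^t\|\p_2 u\|_{H^3}^2\,d\tau$ of (\ref{jj1}) is not provided by the dissipation in (\ref{mhd}); it is the hidden gain made visible by the wave reformulation (\ref{wav}), and I would extract it through an interaction functional
\beq\label{interaction}
\mathcal I(t) = -\sum_{|\alpha|\le 3}\int_{\R^3}\p^\alpha\p_2 b\cdot\p^\alpha u\,dx.
\eeq
Differentiating in time and inserting $\p_t b = \Delta_h b + \p_2 u + N_2$, the leading contribution $-\sum_{|\alpha|\le 3}\int\p^\alpha\p_2^2 u\cdot\p^\alpha u\,dx = \sum_{|\alpha|\le 3}\|\p^\alpha\p_2 u\|_{L^2}^2$ is exactly $\|\p_2 u\|_{H^3}^2$, while the remaining pieces, carrying $\Delta_h b$, $\p_t u$ and the nonlinearities, are absorbed into $\|\na_h b\|_{H^4}^2$, $\|\p_1 u\|_{H^4}^2$ and cubic remainders by Young's inequality. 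Adding a suitably small multiple of $\mathcal I$ to the basic energy keeps the total comparable to $E(t)$ and generates all three dissipative norms on the left.

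The genuine obstacle is the derivative loss problem. Closing the $\|u\|_{H^4}$ identity and removing the unfavorable $\p_3$ via $\p_3 u_3 = -\p_1 u_1 - \p_2 u_2$ leaves the term $\int_{\R^3}\p_2 u_2\,(\p_3^4 u_1)^2\,dx$, in which $\p_2 u_2$ is dissipative only at the $H^3$ level while the two top-order factors $\p_3^4 u_1$ carry no dissipation. The natural device is an anisotropic split: keeping $\|\p_2 u_2\|_{L^\infty_{x_2x_3}L^2_{x_1}}\lesssim\|\p_2 u\|_{H^3}$, one copy of $\p_3^4 u_1$ in $L^2$, and lifting the last copy by $\|w\|_{L^2_{x_2x_3}L^\infty_{x_1}}\lesssim\|w\|_{L^2}^{1/2}\|\p_1 w\|_{L^2}^{1/2}$ so that the \emph{full} $H^4$ dissipation $\|\p_1 u\|_{H^4}$ of (\ref{jj}) reaches $\p_1\p_3^4 u_1$. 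This yields the bound $\|\p_2 u\|_{H^3}\,\|\p_1 u\|_{H^4}^{1/2}\,\|u\|_{H^4}^{3/2}$, whose total dissipative weight is only $3/2$: after extracting $\sup_\tau\|u\|_{H^4}^{3/2}$ one is left with $\int_0^t\|\p_2 u\|_{H^3}\|\p_1 u\|_{H^4}^{1/2}\,d\tau$, which Cauchy--Schwarz cannot close globally without a spurious power of $t$, because the embedding can extract only \emph{half} a $\p_1$ from a pure $\p_3^4$ factor while anything more costly in $x_3$ raises the order to $\p_3^5$. This shortfall, caused precisely by the mismatch between the $H^4$-strength of the $x_1$-dissipation and the $H^3$-strength of the $x_2$-regularity, is where the paper's innovations must enter: one must manufacture the missing half-power of dissipation, e.g.\ by relocating the stray $\p_3$ onto a horizontally dissipated quantity through integration by parts, by upgrading the top-order $x_3$-regularity of $u$ directly from the wave structure (\ref{wav}), or by routing the estimate through the magnetic field, which alone enjoys the full horizontal dissipation $\|\na_h b\|_{H^4}$.

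I expect this last step---disposing of the whole-space Navier--Stokes nonlinearity term by term, deciding for each top-order factor how to split the derivatives so as to match it against the correct one of the two inequivalent dissipation norms without triggering derivative loss---to be the decisive difficulty; the magnetic nonlinearities are comparatively benign since $b$ carries full horizontal dissipation. Once every such term is controlled by $C\,E(t)^{3/2}$, the inequality $E(t)\le C\,E(0)+C\,E(t)^{3/2}$ follows and a standard continuity argument proves Theorem~\ref{main}.
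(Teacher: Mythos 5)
Your overall architecture --- the energy $\mathscr{E}(t)$, the bootstrap inequality $\mathscr{E}(t)\le C\mathscr{E}(0)+C\mathscr{E}(t)^{3/2}$, the extraction of $\int_0^t\|\p_2 u\|_{H^3}^2\,d\tau$ via an interaction functional that shifts the time derivative between $b$ and $u$ --- matches the paper's strategy, and your diagnosis of why the naive anisotropic estimate fails on $\int \p_2 u_2\,(\p_3^4 u_1)^2\,dx$ (the dissipative weight comes out to $3/2$ instead of $2$, leaving a spurious power of $t$) is accurate. But the proposal stops exactly at the decisive step. You write that ``this is where the paper's innovations must enter'' and list three candidate strategies without carrying any of them out; that is an acknowledged gap, not a proof. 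None of the three candidates works as stated: integrating by parts to relocate the stray $\p_3$ produces fifth-order derivatives of $u$, which no norm in $\mathscr{E}$ controls; and the wave structure \eqref{wav} only yields $x_2$-regularity at the $H^3$ level (one order below the dissipation), which is precisely the mismatch you already identified. So the bootstrap inequality is not closed.

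What actually closes it (Proposition \ref{lem-important} and Lemma \ref{lem-assistant}) is a more elaborate version of your third suggestion. For $\mathcal{W}^{ijk}=\int \p_3^3\omega_i\,\p_2 u_j\,\p_3^3\omega_k\,dx$ one substitutes $\p_2 u_j=\p_t b_j+u\cdot\na b_j-\Delta_h b_j-b\cdot\na u_j$ from the second equation of \eqref{mhd}; the resulting term $\int b_j\,\p_t(\p_3^3\omega_i\,\p_3^3\omega_k)\,dx$ is expanded via the vorticity equation, and the non-symmetric piece $\int b_j\bigl[\p_3^3\omega_i\,b\cdot\na\p_3^3 H_k+\p_3^3\omega_k\,b\cdot\na\p_3^3 H_i\bigr]dx$ is handled by adding and subtracting terms to build an artificial symmetry with the current-density equation, converting it into $\p_t(\p_3^3H_i\p_3^3H_k)$ plus terms carrying the full horizontal diffusion $\|\na_h H\|_{H^3}$ of the magnetic field. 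A further loop through the pressure and a second application of the same lemma to $\int\p_2 u_j\,\p_t(\p_3^3\omega_i\p_3^3\omega_k)\,dx$ is also required. Without this chain of substitutions --- or some genuine replacement for it --- the terms $I_{132}$, $I_{22}$ and their relatives in the $\dot H^4$ estimate remain unbounded, and the theorem does not follow from what you have written.
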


We make two remarks. Theorem \ref{main} does not solve the small data global well-posedness problem on the 3D Navier-Stokes equations in (\ref{ns}). The MHD system in (\ref{mhd}) with $b\equiv 0$ does not reduce to (\ref{ns}). It is hoped that
the new idea and techniques discovered in this paper will shed light on the open problem on (\ref{ns}).

\vskip .1in
A previous work of Wu and Zhu \cite{WuZhu} successfully resolved the small data global well-posedness and stability problem on the 3D MHD system with horizontal dissipation and vertical magnetic diffusion near an equilibrium,
\begin{equation}\label{mhd1}
	\begin{cases}
		\p_t u + u \cdot \nabla u - \Delta_h u + \nabla P = b \cdot \nabla b + \partial_1 b, \quad  x \in \mathbb{R}^3, t > 0,\\
		\p_t b + u \cdot \nabla b - \p_3^2 b = b \cdot \nabla u + \partial_1 u, \\
		\nabla \cdot u = \nabla \cdot b = 0.
	\end{cases}
\end{equation}
Although the small data global well-posedness problem on (\ref{mhd1}) is highly nontrivial, it is clear
that the current problem on (\ref{mhd}) is different and can be even more difficult. One simple reason is that
the equation of $u$ in (\ref{mhd1}) contains dissipation in two directions and all the nonlinear terms in the
estimate of the Sobolev norms involve $u$ as a component. However, (\ref{mhd}) has only one-direction velocity
dissipation and the velocity nonlinear term involves only $u$ (no other more regularized components).
The methods in \cite{WuZhu} are not sufficient for the problem of this paper.

\vskip .1in
Since the pioneering work of F. Lin and P. Zhang \cite{LinZh}
and F. Lin, L. Xu and P. Zhnag \cite{LinZhang1}, many efforts have now devoted to the small data global
well-posedness and stability problems on partially dissipated MHD systems. MHD systems with various levels of
dissipation and magnetic diffusion near several steady-states have been thoroughly investigated and a rich array
of results have been established (see, e.g., \cite{AZ,BSS, CaiLei, HeXuYu,HuWang, JiangARMA, JiangJMPA, LinZhang1,LinZh, PanZhouZhu,Ren, Ren2, Tan, WeiZ, WuWu, WuWuXu, ZT1, ZT2,ZhouZhu}). In addition,
global well-posedness on the MHD equations with general large initial data has also been actively pursued
and important progress has been made (see, e.g., {\cite{CaoWuYuan,CMZ,DongLiWu1, DongLiWu0, FNZ,
	Fefferman1, Fefferman2, HuangLi, JNW, JiuZhao2, LinDu, WuMHD2018, Yam1, YuanZhao, Ye})}. Needless to say,
the references listed here represent only a small portion of the very large literature
on the global well-posedness and related problems concerning the MHD equations.

\vskip .2in
We briefly outline the proof of Theorem \ref{main}. We have chosen the Sobolev space $H^4$ as the functional
setting for our solutions since $H^3$ does not appear to be regular enough to accommodate our approach. Since
the local well-posedness of (\ref{mhd}) in $H^4$ follows from a standard procedure (see, e.g., \cite{MaBe}),
the proof focuses on the global $H^4$-bound. The framework of the proof for the global $H^4$-bound is the
bootstrapping argument (see, e.g., \cite[p.20]{Tao}).  The process starts with the setup of a suitable energy functional. Besides the
standard $H^4$-energy, we also need to include the extra regularization term in (\ref{jj1}) resulting from
the background magnetic field and the coupling. More precisely, we set
\beq\label{ed}
\mathscr{E}(t) = \mathscr{E}_1(t) +  \mathscr{E}_2(t),
\eeq
where
\begin{align*}
& \mathscr{E}_1 (t)=\sup_{0 \leq \tau \leq t} \big( \|u\|_{H^4}^2 + \|b\|_{H^4}^2 \big)+\int_0^t \Big( \|\p_1u\|_{H^4}^2 + \|\na_h b\|_{H^4}^2\Big) d\tau,\\
& \mathscr{E}_2(t) =\int_0^t \|\p_2 u\|_{H^3}^2  d\tau.
\end{align*}
Our main efforts are devoted to showing that, for some constant $C_0>0$ and for all $t>0$,
\begin{equation}\label{bb}
	\mathscr{E} (t) \leq C_0\, \mathscr{E} (0)+ C_0\, \mathscr{E}^{\frac32} (0) + C_0\,\mathscr{E}^{\frac{3}{2}} (t) + C_0\, \mathscr{E}^2 (t).
\end{equation}
Then an application of the bootstrapping argument to (\ref{bb}) would yield the desired result, namely, for
a sufficiently small $\epsilon>0$,
$$
\|u_0\|_{H^4} + \|b_0\|_{H^4} \le \epsilon \quad \mbox{or}\quad \mathscr{E}(0) \le \epsilon^2
$$
would imply, for a constant $C>0$ and for any $t>0$,
$$
\mathscr{E}(t) \le C\,\epsilon^2.
$$
It then yields the global uniform $H^4$-bound on $(u, b)$ and the stability.

\vskip .1in
The proof of (\ref{bb}) is highly nontrivial. It is achieved by establishing the following two inequalities
for positive constants $C_1$ through $C_8$,
\begin{align}
	&{\mathscr{E}_{2}(t)}\le C_1\, \mathscr{E}(0) + C_2\,\mathscr{E}_1(t) +  C_3\,\mathscr{E}_1^{\frac32}(t)
	+ C_4 \,\mathscr{E}_2^{\frac32}(t), \label{lb}\\
	&{\mathscr{E}_{1} (t)}\le C_5\, \mathscr{E}(0) + C_6\, \mathscr{E}^{\frac32} (0) + C_7\, \mathscr{E}^{\frac32}(t) + C_8\, \mathscr{E}^2 (t), \label{hb}
\end{align}
which clearly lead to (\ref{bb}) by adding (\ref{hb}) to a suitable multiple of (\ref{lb}). Due to the
equivalence of the norms
$$
\|f\|_{H^k}  {\; \sim \; }  \|f\|_{L^2} + \sum_{i=1}^3 \|\p_i^k f\|_{L^2},
$$
the verification of (\ref{lb}) is naturally divided into the estimates of
$$
\int_0^t \|\p_2 u\|_{L^2}^2 \,d\tau \quad \mbox{and} \quad
\sum_{i=1}^3\int_0^t \|\p_i^3 \p_2 u\|_{L^2}^2 \,d\tau.
$$
One key strategy is to take advanatge of the coupling and interaction of the MHD system in (\ref{mhd})
to shift the time integrability. More precisely, we replace $\p_2 u$ by the evolution of $b$,
$$
\p_2 u = \p_t b +u\cdot\nabla b-\Delta_h b-b\cdot\nabla u
$$
and convert the time integral of $\|\p_2 u\|_{L^2}^2$ into time integrals of more regular terms,
$$
\int_0^t \|\p_2 u\|_{L^2}^2 \,d\tau = \int_0^t \int_{\mathbb R^3} (\p_t b +u\cdot\nabla b-\Delta_h b-b\cdot\nabla u)\cdot \p_2u \;dx\,d\tau.
$$
We then further shift the time derivative from $b$ to $\p_2 u$ and involve the equation of $u$.
This process generates many more terms, but it replaces those with less time-integrable terms by more
integrable nonlinear terms. More details can be found in Section \ref{e2b}.

\vskip .1in
It is much more difficult to verify (\ref{hb}). Undoubtedly the most difficult term is generated by the
Navier-Stokes nonlinear term. We use the vorticity formulation to take advantage of certain symmetries. Since
$\|\om\|_{L^2} = \|\na u||_{L^2}$ for the vorticity $\om =\na\times u$, it suffices to  control
$\|\om\|_{\dot H^3}$. One of the wildest terms is given by
$$
\int \p_3 u_3\, \p_3^3 \om \cdot \p_3^3 \om \,dx .
$$
Naturally we eliminate the bad derivative $\p_3$ via the divergence-free condition $\p_3 u_3 = -\p_1 u_1 -\p_2 u_2$,
but this leads to another uncontrolled term
\beq\label{aa}
\int \p_2 u_2\, \p_3^3 \om \cdot \p_3^3 \om \,dx.
\eeq
As aforementioned, $\mathscr{E}_{2}$ contains only fourth-order derivatives and integrating by parts in
(\ref{aa}) would generate uncontrollable fifth-order derivatives. Obtaining a suitable bound on
(\ref{aa}) appears to be an impossible mission. This leads to the derivative loss problem.
This paper introduces several new techniques to unearth the hidden
structure in the nonlinearity. We enhance the nonlinearity through the coupling in the system and induce
cancellations through the construction of artificial symmetries. More precisely, we are able to establish
the desired upper bounds stated in the following proposition. For notational convenience, we use $A \lesssim B$
to mean $A\le C\, B$ for a pure constant $C>0$.
\begin{proposition}\label{lem-important}
	Let $(u, b)\in H^4$ be a solution of (\ref{mhd}). Let $\omega = \nabla \times u$ and $H = \nabla \times b$
	be the corresponding vorticity and current density, respectively. Let $\mathscr{E}(t)$ be defined as in  \eqref{ed}. Let $\mathcal{W}(t)$ be the interaction type terms  defined as follows,
	$$ \mathcal{W}^{ijk}(t) = \int_{\mathbb{R}^3} \p_3^{3}\omega_i \p_2u_j \p_3^{3}\omega_k \; dx \quad \text{for} \; \; i,j,k \in \{1,2,3\}.$$
	Then the time integral of $\mathcal{W}^{ijk}$ admits the following bound,
	$$ \int_0^t \mathcal{W}^{ijk} (\tau) \; d\tau  \lesssim \mathscr{E}^\frac{3}{2}(0) +\mathscr{E}^{\frac{3}{2}}(t) + \mathscr{E}^2 (t). $$
\end{proposition}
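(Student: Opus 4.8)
The plan is to exploit the coupling in the MHD system to trade the dangerous factor $\partial_2 u_j$ for more regular quantities, following the same philosophy used for (\ref{lb}). First I would invoke the evolution equation for $b$ to write
$$
\partial_2 u_j = \partial_t b_j + (u\cdot\nabla b)_j - (\Delta_h b)_j - (b\cdot\nabla u)_j,
$$
and substitute this into $\mathcal{W}^{ijk}$. This splits $\int_0^t\mathcal{W}^{ijk}\,d\tau$ into a \emph{nonlinear part} (from $u\cdot\nabla b$ and $b\cdot\nabla u$), a \emph{diffusion part} (from $\Delta_h b$), and a \emph{time-derivative part} (from $\partial_t b$). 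The nonlinear part is genuinely quartic in $(u,b)$; each such term pairs the two top-order factors $\partial_3^3\omega$ with a product of lower-order factors, and after placing the lower-order factors in $L^\infty$ via the 3D Sobolev embedding and using $\|\partial_3^3\omega\|_{L^2}\lesssim\|u\|_{H^4}$ one obtains a bound of order $\mathscr{E}^2(t)$.

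For the diffusion part, writing $\Delta_h b=\partial_1^2 b+\partial_2^2 b$, the $\partial_1^2 b$ piece is handled by integrating by parts once in $x_1$, distributing one $\partial_1$ onto a $\partial_3^3\omega$ factor; since $\int_0^t\|\partial_1\partial_3^3\omega\|_{L^2}^2\,d\tau\lesssim\int_0^t\|\partial_1 u\|_{H^4}^2\,d\tau$ and $\int_0^t\|\partial_1 b\|_{L^\infty}^2\,d\tau$ are both controlled by $\mathscr{E}$, a Cauchy-Schwarz in time yields an $\mathscr{E}^{3/2}(t)$ bound. The time-derivative part is integrated by parts in time: the boundary terms are the cubic expressions $\int\partial_3^3\omega_i\,b_j\,\partial_3^3\omega_k\,dx$ evaluated at $\tau=0,t$, bounded by $\mathscr{E}^{3/2}(0)+\mathscr{E}^{3/2}(t)$, while in the bulk term I substitute the vorticity equation $\partial_t\omega=\partial_1^2\omega+\partial_2 H+\mathcal{N}_\omega$, where $\mathcal{N}_\omega=-u\cdot\nabla\omega+\omega\cdot\nabla u+b\cdot\nabla H-H\cdot\nabla b$. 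The $\partial_1^2\omega$ contribution is treated exactly as the $\partial_1^2 b$ piece above, and the remainder $\mathcal{N}_\omega$ again produces quartic terms of order $\mathscr{E}^2(t)$.

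The genuinely hard contributions are the magnetic coupling term $\int\partial_2\partial_3^3 H_i\,\partial_3^3\omega_k\,b_j$ coming from $\partial_2 H$, together with the $\partial_2^2 b$ piece of the diffusion part; in both, a single $\partial_2$-derivative is forced onto a top-order factor, producing $\partial_2\partial_3^3\omega\sim\partial_2\partial_3^4 u$, a fifth-order object that $\|\partial_2 u\|_{H^3}$ (hence $\mathscr{E}_2$) cannot control — this is precisely the derivative loss. The resolution I would pursue is to take the curl of the $b$-equation to obtain the current-density identity
$$
\partial_2\omega=\partial_t H-\Delta_h H+\nabla\times(u\cdot\nabla b)-\nabla\times(b\cdot\nabla u),
$$
and use it to eliminate every occurrence of $\partial_2\partial_3^3\omega$. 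The resulting $\Delta_h\partial_3^3 H$ carries two horizontal derivatives, which I distribute so that \emph{both} interacting factors become $\nabla_h\partial_3^3 H$, each controlled in the time-integrable norm $\int_0^t\|\nabla_h b\|_{H^4}^2\,d\tau$; this removes the derivative loss at the cost of an $\mathscr{E}^{3/2}(t)$ bound. The term $\partial_t\partial_3^3 H$ is paired against $\partial_3^3 H$ and symmetrized into a perfect time derivative $\tfrac12\partial_t(\partial_3^3 H)^2$, so that a further integration by parts in time returns only boundary quantities $\int(\partial_3^3 H)^2 b\,dx\lesssim\mathscr{E}^{3/2}$; it is here that the artificial symmetrization is essential, since without pairing the $\partial_t H$ factor against a matching $H$ factor the time integral cannot be closed. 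The remaining curl-nonlinearities contribute at order $\mathscr{E}^2(t)$.

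The main obstacle I anticipate is the bookkeeping that guarantees this scheme closes. Substituting the current-density identity re-introduces a $\partial_2\omega$ inside the curl-nonlinearities and inside $\partial_t H$ (through $\partial_t H=\Delta_h H+\partial_2\omega+\cdots$), so one must verify that each reappearance is either genuinely of higher order, hence absorbable into $\mathscr{E}^2$, or is again convertible into $\Delta_h H$ plus a perfect time derivative, so that the recursion terminates rather than regenerating the loss. A second delicate point is the component analysis: since $\omega_3=\partial_1 u_2-\partial_2 u_1$ and $H_3=\partial_1 b_2-\partial_2 b_1$ are purely horizontal-derivative quantities (fully time-integrable) whereas $\omega_1,\omega_2,H_1,H_2$ each contain a bad pure-$\partial_3$ component, the favorable pairings must be arranged index by index, using $\nabla\cdot u=\nabla\cdot b=0$ to trade $\partial_3$-derivatives for horizontal ones wherever the naive estimate fails. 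Throughout, the anisotropic Sobolev inequalities that place the low-order factors in $L^\infty$ while keeping the top-order factors in the appropriate time-integrable $L^2_tL^2_x$ norm are what make the Cauchy-Schwarz splittings quantitatively consistent with the claimed bound $\mathscr{E}^{3/2}(0)+\mathscr{E}^{3/2}(t)+\mathscr{E}^2(t)$.
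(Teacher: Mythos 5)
Your opening moves coincide with the paper's: substitute the $b$-equation for $\p_2 u_j$, treat the transport terms as quartic, integrate the $\p_1^2 b_j$ piece by parts in $x_1$, and move the time derivative from $\p_t b_j$ onto $\p_3^{3}\omega_i\,\p_3^{3}\omega_k$ before invoking the vorticity equation. The gap lies in what happens after that last substitution. You dismiss the nonlinearity $\mathcal{N}_\omega$ as ``quartic terms of order $\mathscr{E}^2(t)$,'' but its $b\cdot\na H$ contribution produces
$$\int_{\mathbb{R}^3} b_j\,\p_3^{3}\omega_i\;b\cdot\na\p_3^{3}H_k\;dx \;\supset\; \int_{\mathbb{R}^3} b_j\,b_3\,\p_3^{3}\omega_i\,\p_3^{4}H_k\;dx,$$
a fifth-order pure-$\p_3$ derivative of $b$ that no $H^4$ norm controls and that cannot be integrated by parts without creating $\p_3^{4}\omega_i$. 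Unlike $u\cdot\na\p_3^{3}\omega_k$, this term is not symmetric under $i\leftrightarrow k$ (it pairs $\omega$ against $H$), so the transport cancellation is unavailable. This is exactly the term that forces the paper's ``artificial symmetry'': rewrite $-\p_3^{3}\omega\;b\cdot\na\p_3^{3}H$ as $\p_3^{3}H\;b\cdot\na\p_3^{3}\omega$ plus a total $b\cdot\na$ derivative, substitute the equation for $H$ to replace $b\cdot\na\p_3^{3}\omega_k$ by $(\p_3^{3}H_k)_t+\cdots$, and only then does summing over $\{i,k\}$ produce the perfect time derivative $\p_t(\p_3^{3}H_i\,\p_3^{3}H_k)$. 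Your proposal contains no mechanism for this term, and it is the one that consumes most of the paper's Lemma 4.1.

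Second, your treatment of the $\p_2^2 b_j$ piece does not close as described. After integrating by parts in $x_2$, the dangerous factor $\p_2\p_3^{3}\omega_i$ is paired with $\p_3^{3}\omega_k$; substituting $\p_2\omega=\p_t H-\Delta_h H+\cdots$ then yields $\p_t\p_3^{3}H_i$ multiplied by $\p_3^{3}\omega_k$, \emph{not} by $\p_3^{3}H_k$, so the claimed symmetrization into $\tfrac12\p_t(\p_3^{3}H)^2$ is not available at that point. Writing $\p_t(\p_3^{3}H_i\,\p_3^{3}\omega_k)-\p_3^{3}H_i\,\p_3^{3}\p_t\omega_k$ instead re-introduces $b\cdot\na\p_3^{3}H_k$ through $\p_t\omega_k$, i.e.\ the same loss, and you yourself concede that the termination of this recursion is unverified. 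The paper avoids the loop by going the other way: it substitutes the $u$-equation for the \emph{low-order} factor $\p_2 b_j$, producing $\p_2\p_t u_j$ plus a pressure term handled by Riesz transforms; the time derivative is then shifted onto $\p_3^{3}\omega_i\,\p_3^{3}\omega_k$, and the resulting integral $\int\p_2 u_j\,\p_t(\p_3^{3}\omega_i\,\p_3^{3}\omega_k)$ is estimated by the same lemma that treats $\int b_j\,\p_t(\p_3^{3}\omega_i\,\p_3^{3}\omega_k)$. Incidentally, the term $\int b_j\,\p_2\p_3^{3}H_i\,\p_3^{3}\omega_k$ that you single out as genuinely hard is in fact benign: $\p_2\p_3^{3}H$ is controlled by $\|\na_h b\|_{H^4}$, which is already time-integrable in $\mathscr{E}_1$, and the paper bounds it directly.
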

The proof of this proposition is not trivial. When we just have the 3D Navier-Stokes with one-directional dissipation,
this term can not be suitably bounded and the small-data global well-posedness remains an open problem for the 3D Navier-Stokes.  The advantage of working with the MHD system in (\ref{mhd}) is the coupling and interaction. We take advantage of this coupling
to replace $\p_2 u_j$ via the equation of $b$,
\begin{equation*}\label{W}
	\begin{split}
		\mathcal{W}^{ijk}(t)=\; & \int_{\mathbb{R}^3} \p_3^{3}\omega_i \Big[\p_t b_j +u\cdot\nabla b_j -\Delta_h b_j -b\cdot\nabla u_j\Big] \p_3^{3}\omega_k \; dx \\
		= \; & \frac{d}{dt} \int_{\mathbb{R}^3} \p_3^{3}\omega_i b_j \p_3^{3}\omega_k dx- \int_{\mathbb{R}^3} b_j \p_t(\p_3^{3}\omega_i \p_3^{3}\omega_k)dx \\
		&+ \int_{\mathbb{R}^3} \p_3^{3}\omega_i u\cdot\nabla b_j \p_3^{3}\omega_kdx - \int_{\mathbb{R}^3} \p_3^{3}\omega_i b\cdot\nabla u_j \p_3^{3}\omega_kdx \\
		& -\int_{\mathbb{R}^3} \p_3^{3}\omega_i \Delta_h b_j \p_3^{3}\omega_kdx.
	\end{split}
\end{equation*}
Immediately we encounter the new difficult term
$$
-\int_{\mathbb{R}^3} b_j \p_t(\p_3^{3}\omega_i \p_3^{3}\omega_k)dx,
$$
which is further converted into integrals of many more terms after invoking the evolution of the vorticity,
\begin{equation}\nonumber
	\begin{split}
		&-\int_{\mathbb{R}^3} b_j \p_t(\p_3^{3}\omega_i \p_3^{3}\omega_k)dx\\
 &=\int_{\mathbb{R}^3} b_j \p_3^{3}\omega_i \p_3^{3}\big( u\cdot\nabla\omega_k - \omega\cdot\nabla u_k -\p_1^2 \omega_k -b\cdot\nabla H_k +H \cdot\nabla b_k -\p_2 H_k \big) \\
\;& \qquad + b_j \,\p_3^{3}\omega_k \p_3^{3}\big( u\cdot\nabla\omega_i - \omega\cdot\nabla u_i -\p_1^2 \omega_i -b\cdot\nabla H_i +H \cdot\nabla b_i -\p_2 H_i \big)\,dx.
	\end{split}
\end{equation}
Some of the terms above can be paired together to form symmetries to deal with the derivative loss problem. This
process also allows us to convert some of the cubic nonlinearity into quartic nonlinearity, which helps
improve the time integrability. However, there are terms that can not be paired into symmetric structure, for example,
$$
\int_{\mathbb{R}^3} b_j \big[ -\p_3^{3}\omega_i b\cdot\nabla \p_3^{3}H_k  - \p_3^{3}\omega_k b\cdot\nabla\p_3^{3} H_i\big]\; dx.
$$
Our idea in dealing with such terms is to construct artificial symmetries by adding and subtracting suitable terms.
This strategy helps us alleviate the derivative loss problem eventually. The technical details are very complicated
and are left to the proof of Proposition \ref{lem-important} in  Section \ref{proof-lem}.

\vskip .1in
The rest of this paper is divided into four sections. Section \ref{e2b} proves (\ref{lb}), one of the two
key energy inequalities while Section \ref{proof-high} establishes the second key energy inequality, namely
(\ref{hb}). Section \ref{proof-lem} provides the proof of Proposition \ref{lem-important} and deals some of the
most difficult terms in the Navier-Stokes nonlinearity in Lemma \ref{lem-assistant}.
The last section, Section \ref{proof-thm}, finishes the proof of Theorem \ref{main}.

\vskip .3in
\section{Proof of (\ref{lb})}
\label{e2b}

This section proves (\ref{lb}), namely, for four positive constants $C_1$ through $C_4$,
\beq\label{bb1}
\mathscr{E}_{2}\le C_1\, \mathscr{E}(0) + C_2\,\mathscr{E}_1(t) +  C_3\,\mathscr{E}_1^{\frac32}(t)
+ C_4 \,\mathscr{E}_2^{\frac32}(t) \quad\mbox{for all $t>0$.}
\eeq
 The following lemma provides a powerful tool to control the triple products
in terms of anisotropic upper bounds.

\begin{lemma}\label{lem-anisotropic-est}
	The following inequalities hold when the right-hand sides are all bounded,
	\begin{equation}\nonumber
		\begin{split}
			\int_{\mathbb{R}^3} |f g h| \;dx \lesssim \;& \|f\|_{L^2}^{\frac{1}{2}} \|\partial_1 f\|_{L^2}^{\frac{1}{2}}\|g\|_{L^2}^{\frac{1}{2}} \|\partial_2 g\|_{L^2}^{\frac{1}{2}}\|h\|_{L^2}^{\frac{1}{2}} \|\partial_3 h\|_{L^2}^{\frac{1}{2}},\\
			 {\int_{\mathbb{R}^3} |f g h| \;dx \lesssim }\;& {\|f\|_{L^2}^{\frac{1}{4}} \|\partial_i f\|_{L^2}^{\frac{1}{4}}
				\|\partial_j f\|_{L^2}^{\frac{1}{4}}\|\partial_i\p_j f\|_{L^2}^{\frac{1}{4}}\|g\|_{L^2}^{\frac{1}{2}} \|\partial_k g\|_{L^2}^{\frac{1}{2}}\|h\|_{L^2}}\\
			\lesssim \;&  {\|f\|_{H^1}^{\frac{1}{2}} \|\partial_i f\|_{H^1}^{\frac{1}{2}}\|g\|_{L^2}^{\frac{1}{2}}
				\|\partial_k g\|_{L^2}^{\frac{1}{2}}\|h\|_{L^2},}\\
			\left(\int_{\mathbb{R}^3} |f g|^2 \;dx\right)^\frac{1}{2} \lesssim \;&\|f\|_{L^2}^{\frac{1}{4}} \|\partial_i f\|_{L^2}^{\frac{1}{4}} \|\partial_j f\|_{L^2}^{\frac{1}{4}} \|\partial_i \partial_j f\|_{L^2}^{\frac{1}{4}} \|g\|_{L^2}^{\frac{1}{2}} \|\partial_k g\|_{L^2}^{\frac{1}{2}}\\
			\lesssim \;&\|f\|_{H^1}^{\frac{1}{2}} \|\partial_i f\|_{H^1}^{\frac{1}{2}}  \|g\|_{L^2}^{\frac{1}{2}} \|\partial_k g\|_{L^2}^{\frac{1}{2}}, \\
			\int_{\mathbb{R}^3} |f g h v| \;dx \lesssim \;& \|f\|_{L^2}^{\frac{1}{4}} \|\partial_i f\|_{L^2}^{\frac{1}{4}} \|\partial_j f\|_{L^2}^{\frac{1}{4}} \|\partial_i \partial_j f\|_{L^2}^{\frac{1}{4}}\\
			&  \cdot\|g\|_{L^2}^{\frac{1}{4}} \|\partial_i g\|_{L^2}^{\frac{1}{4}} \|\partial_j g\|_{L^2}^{\frac{1}{4}} \|\partial_i \partial_i g\|_{L^2}^{\frac{1}{4}} \\
			& \cdot  \|h\|_{L^2}^{\frac{1}{2}} \|\partial_k h\|_{L^2}^{\frac{1}{2}} \|v\|_{L^2}^{\frac{1}{2}} \|\partial_k v\|_{L^2}^{\frac{1}{2}} \\
			\lesssim  \;&  \|f\|_{H^1}^{\frac{1}{2}} \|\partial_i f\|_{H^1}^{\frac{1}{2}} \|g\|_{H^1}^{\frac{1}{2}} \|\partial_i g\|_{H^1}^{\frac{1}{2}}  \|h\|_{L^2}^{\frac{1}{2}} \|\partial_k h\|_{L^2}^{\frac{1}{2}} \|v\|_{L^2}^{\frac{1}{2}} \|\partial_k v\|_{L^2}^{\frac{1}{2}}, \\
		\end{split}
	\end{equation}
	where $i,j$ and $k$ belong to $\{1,2,3\}$ are different numbers.
\end{lemma}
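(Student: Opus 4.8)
The plan is to reduce all four inequalities to repeated use of the one-dimensional Agmon (Gagliardo--Nirenberg) inequality
\[
\|\phi\|_{L^\infty(\mathbb{R})}\le\sqrt{2}\,\|\phi\|_{L^2(\mathbb{R})}^{1/2}\,\|\phi'\|_{L^2(\mathbb{R})}^{1/2},
\]
applied in one coordinate at a time, combined with Fubini's theorem, the Cauchy--Schwarz inequality and H\"older's inequality. A recurring device is the elementary fact that a partial $L^2$-norm is controlled in the remaining variables by the corresponding partial norm of a derivative: if $A(x_k)=\|f(\cdot,x_k)\|_{L^2_{x_ix_j}}$ then $|\partial_k A|\le\|\partial_k f(\cdot,x_k)\|_{L^2_{x_ix_j}}$ by Cauchy--Schwarz, so differentiating a slice-norm produces only the matching derivative of $f$. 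Applying the one-dimensional inequality successively in $x_i$ and then in $x_j$ gives the two-dimensional anisotropic bound
\[
\sup_{x_i,x_j}|f|\le 2\,\|f\|_{L^2_{x_ix_j}}^{1/4}\|\partial_i f\|_{L^2_{x_ix_j}}^{1/4}\|\partial_j f\|_{L^2_{x_ix_j}}^{1/4}\|\partial_i\partial_j f\|_{L^2_{x_ix_j}}^{1/4}
\]
for each fixed value of the third coordinate, which is the second building block.

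For the first inequality I would peel off one variable per function. Fixing $(x_2,x_3)$ and integrating in $x_1$, Cauchy--Schwarz gives $\int_{\mathbb{R}}|fgh|\,dx_1\le(\sup_{x_1}|f|)\,\|g\|_{L^2_{x_1}}\|h\|_{L^2_{x_1}}$; integrating the result over the $(x_2,x_3)$-plane leaves a planar integral of the three functions $\sup_{x_1}|f|$, $\|g\|_{L^2_{x_1}}$ and $\|h\|_{L^2_{x_1}}$. I then repeat the peeling in $x_2$ (extracting $\sup_{x_2}$ of the $g$-slice-norm) and in $x_3$ (extracting $\sup_{x_3}$ of the $h$-slice-norm). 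At each stage the one-dimensional inequality turns a directional supremum into the geometric mean of an $L^2$-norm and the matching first derivative, and the slice-norm remark guarantees that only $\partial_1 f$, $\partial_2 g$ and $\partial_3 h$ ever appear. Collecting the three factors yields the first inequality.

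For the second inequality I would split $\mathbb{R}^3=\mathbb{R}^2_{x_ix_j}\times\mathbb{R}_{x_k}$. For each fixed $x_k$ I bound $f$ by $\sup_{x_i,x_j}|f|$ and estimate $\int_{\mathbb{R}^2}|gh|\,dx_idx_j$ by Cauchy--Schwarz, reducing the integral to $\int_{\mathbb{R}}F(x_k)\widetilde G(x_k)\widetilde H(x_k)\,dx_k$ with $F=\sup_{x_i,x_j}|f|$, $\widetilde G=\|g\|_{L^2_{x_ix_j}}$ and $\widetilde H=\|h\|_{L^2_{x_ix_j}}$. Placing $\widetilde G$ in $L^\infty_{x_k}$ via the one-dimensional inequality produces the factor $\|g\|_{L^2}^{1/2}\|\partial_k g\|_{L^2}^{1/2}$; a further Cauchy--Schwarz in $x_k$ gives $\|F\|_{L^2_{x_k}}\|\widetilde H\|_{L^2_{x_k}}$ with $\|\widetilde H\|_{L^2_{x_k}}=\|h\|_{L^2}$; and inserting the two-dimensional bound for $F$ and applying H\"older in $x_k$ with four exponents equal to $4$ yields $\|F\|_{L^2_{x_k}}\le 2\|f\|_{L^2}^{1/4}\|\partial_i f\|_{L^2}^{1/4}\|\partial_j f\|_{L^2}^{1/4}\|\partial_i\partial_j f\|_{L^2}^{1/4}$. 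The $H^1$ form is then immediate from $\|\partial_j f\|_{L^2}\le\|f\|_{H^1}$ and $\|\partial_i\partial_j f\|_{L^2}\le\|\partial_i f\|_{H^1}$. The third inequality is the identical computation carried out for $\int|f|^2|g|^2\,dx$: bounding $|f|^2\le F(x_k)^2$ and noting $\int_{\mathbb{R}^2}|g|^2\,dx_idx_j=\widetilde G(x_k)^2$ reduces it to the very same factors, and a square root gives the claim. The fourth inequality is proved directly: bounding both $f$ and $g$ by their two-dimensional suprema and both $h$ and $v$ by their $(x_i,x_j)$-slice $L^2$-norms reduces $\int|fghv|\,dx$ to $\int_{\mathbb{R}}F\,G\,\widetilde H\,\widetilde V\,dx_k$, after which I place $\widetilde H,\widetilde V$ in $L^\infty_{x_k}$ and $F,G$ in $L^2_{x_k}$, exactly as above (note that the factor displayed as $\|\partial_i\partial_i g\|$ should read $\|\partial_i\partial_j g\|$, consistent with the stated $H^1$ bound $\|g\|_{H^1}^{1/2}\|\partial_i g\|_{H^1}^{1/2}$).

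The computations are all routine once the two building blocks are in place, so there is no deep obstacle; the only point requiring genuine care is obtaining the clean geometric-mean (product) form rather than a sum of terms. This forces one to perform the one-dimensional embeddings in the correct order, to use $\sup(ab)\le(\sup a)(\sup b)$ when separating the two directions in the two-dimensional estimate, and to apply H\"older with precisely matched exponents when converting slice-norm products into full $L^2$-norms; the slice-norm differentiation bound is what keeps every derivative correctly assigned throughout.
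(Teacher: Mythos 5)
Your proof is correct and follows exactly the route the paper indicates: the paper does not reproduce the argument but states that the lemma rests on the one-dimensional interpolation inequality $\|\phi\|_{L^\infty(\mathbb{R})}\le\sqrt{2}\,\|\phi\|_{L^2(\mathbb{R})}^{1/2}\|\phi'\|_{L^2(\mathbb{R})}^{1/2}$ and refers to \cite{WuZhu} for details, and your coordinate-by-coordinate peeling via Fubini, Cauchy--Schwarz and the slice-norm differentiation bound is precisely that standard argument. Your observation that the factor $\|\partial_i\partial_i g\|_{L^2}^{1/4}$ in the fourth inequality should read $\|\partial_i\partial_j g\|_{L^2}^{1/4}$ is also correct.
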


The proof of Lemma \ref{lem-anisotropic-est} relies on the following one-dimensional interpolation inequality,
for $f \in H^1(\mathbb R)$,
\begin{equation}\nonumber
	\|f\|_{L^\infty (\mathbb{R})} \leq \sqrt{2} \|f\|_{L^2(\mathbb{R})}^\frac{1}{2} \|f'\|_{L^2(\mathbb{R})}^\frac{1}{2}.
\end{equation}
A detailed proof of this lemma can be found in \cite{WuZhu}. We remark that similar anisotropic inequalities
are also available for two-dimensional functions (see \cite{CaoWu}).

\vskip .1in
We are now ready to prove (\ref{bb1}).
\begin{proof}[Proof of (\ref{bb1})] Due to the norm equivalence, namely for any integer $k>0$,
	\beq\label{ne}
	\|f\|_{H^k}^2 { \;\sim \;}  \|f\|_{L^2}^2 + \sum_{i=1}^3 \|\p_i^k f\|_{L^2}^2,
	\eeq
	 it suffices to bound
	 $$
	 \int_0^t \|\p_2u\|_{L^2}^2\,d\tau \quad \mbox{and}\quad  	\sum_{i=1}^3  \int_0^t \| \p_i^{3} \p_2 u \|_{L^2}^2\,d\tau.
	 $$
	Recalling the equations in (\ref{mhd}),
	\begin{equation}\label{er}
		\begin{split}
		&\p_2 u = \p_t b+u\cdot\nabla b-\Delta_h b-b\cdot\nabla u, \\
		&\p_t u = {-u\cdot\nabla u + \p_1^2 u -\na P +b\cdot\nabla b +\p_2b ,}
\end{split}
\end{equation}
we have
\begin{equation}\label{4eq2}
\begin{split}
  \|\p_2u\|_{L^2}^2 =& \int_{\mathbb{R}^3} (\p_t b+u\cdot\nabla b-\Delta_h b-b\cdot\nabla u)\cdot \p_2u \;dx \\
  =& \; \frac{d}{dt} \int_{\mathbb{R}^3} b \cdot \p_2u \;dx +\int_{\mathbb{R}^3}\p_2b \cdot \p_t u dx\\
  &+\int_{\mathbb{R}^3}(u\cdot\nabla b-\Delta_h b-b\cdot\nabla u)\cdot \p_2u \;dx\\
   =& \; \frac{d}{dt} \int_{\mathbb{R}^3} b \cdot \p_2u \;dx \\
   & + \int_{\mathbb{R}^3} \p_2b \cdot (\p_2b+b\cdot\nabla b-\nabla P + \p_1^2u-u\cdot\nabla u)\;dx\\
   &+\int_{\mathbb{R}^3}(u\cdot\nabla b-\Delta_h b-b\cdot\nabla u)\cdot \p_2u \;dx.
\end{split}
\end{equation}
Due to $\na\cdot b=0$,
$$
\int_{\mathbb{R}^3} \p_2b \cdot \na P\,dx =0.
$$
We bound the nonlinear terms. By  Sobolev's inequality and Lemma \ref{lem-anisotropic-est},
\begin{equation}\nonumber
\begin{split}
&\int_{\mathbb{R}^3} \p_2b \cdot (b\cdot\nabla b)\;dx =\int_{\mathbb{R}^3}\big( \p_2b \cdot (b_h \cdot \na_hb) + \p_2b \cdot (b_3\p_3b) \big)\;dx \\
& \lesssim  \|\na_hb\|_{L^2}^2\|b\|_{H^2} + \|\p_2b\|_{L^2}^\frac{1}{2}\|{\bf \p_3}\p_2 b\|_{L^2}^\frac{1}{2} \|b_3\|_{L^2}^\frac{1}{2}\|{\bf \p_1} b_3\|_{L^2}^\frac{1}{2}
\|\p_3b\|_{L^2}^\frac{1}{2}\|{\bf \p_2} \p_3b\|_{L^2}^\frac{1}{2} \\
& \lesssim \|b\|_{H^2} \|\na_h b\|_{H^1}^2,
\end{split}
\end{equation}
\begin{equation}\nonumber
\begin{split}
&\int_{\mathbb{R}^3} \p_2b \cdot (u\cdot\nabla u) \;dx =\int_{\mathbb{R}^3}{\big(} \p_2b \cdot (u_h \cdot \na_hu)+\p_2b \cdot (u_3\p_3u) {\big)}\;dx \\
& \lesssim  \|\p_2b\|_{L^2}\|\na_hu\|_{L^2}\|u\|_{H^2} + \|\p_2 b\|_{L^2}^\frac{1}{2} \| {\bf \p_3} \p_2 b\|_{L^2}^\frac{1}{2} \|u_3\|_{L^2}^\frac{1}{2}\|{\bf \p_2} u_3\|_{L^2}^\frac{1}{2}
\|\p_3 u\|_{L^2}^\frac{1}{2} \|{\bf \p_1}\p_3 u\|_{L^2}^\frac{1}{2} \\
& \lesssim \|\p_2b\|_{L^2}\|\na_hu\|_{L^2}\|u\|_{H^2} + \|\p_2 b\|_{H^1} \|\p_2 u\|_{L^2}^\frac{1}{2} \|\p_1 u\|_{H^1}^\frac{1}{2}\|u\|_{H^1},
\end{split}
\end{equation}
\begin{equation}\nonumber
	\begin{split}
		& \int_{\mathbb{R}^3}  (u\cdot\nabla b) \cdot \p_2u\;dx \\
		= & \int_{\mathbb{R}^3} {\big(} (u_h\cdot \na_h b) \cdot \p_2u+ (u_3\p_3b) \cdot \p_2u { \big)} \;dx \\
		\lesssim \; & \|\na_hb\|_{L^2}\|\p_2u\|_{L^2}\|u\|_{H^2}
		+ \|u_3\|_{L^2}^\frac{1}{2} \|{\bf \p_1} u_3\|_{L^2}^\frac{1}{2} \|\p_3 b\|_{L^2}^\frac{1}{2} \|{\bf \p_2} \p_3 b\|_{L^2}^\frac{1}{2} \|\p_2 u\|_{L^2}^\frac{1}{2} \|\p_2 {\bf \p_3} u\|_{L^2}^\frac{1}{2}
	\end{split}
\end{equation}
and
\begin{equation}\nonumber
	\begin{split}
		& \int_{\mathbb{R}^3}  (b\cdot\nabla u) \cdot \p_2u \;dx \\
		= & \int_{\mathbb{R}^3} (b_h\cdot \na_h u) \cdot \p_2u+ (b_3\p_3u) \cdot \p_2u dx \\
		\lesssim \; & \|b\|_{H^2} \|\na_h u\|_{L^2}^2 + \|b_3\|_{L^2}^\frac{1}{2} \|{\bf \p_2} b_3 \|_{L^2}^\frac{1}{2} \|\p_3 u\|_{L^2}^\frac{1}{2}\|{\bf \p_1} \p_3 u\|_{L^2}^\frac{1}{2}
		\|\p_2 u\|_{L^2}^\frac{1}{2} \|\p_2 {\bf \p_3} u\|_{L^2}^\frac{1}{2}.
	\end{split}
\end{equation}
Inserting these bounds in (\ref{4eq2}), integrating in time and using the simple bound
$$
\int_0^t {\frac{d}{d\tau}} \int_{\mathbb{R}^3} b \cdot \p_2u \;dx \; {d\tau} \le \|b(t)\|_{L^2} \|{\p_2 u(t)}\|_{L^2}
+ \|b_0\|_{L^2} \|\p_2 u_0 \|_{L^2} \le \mathscr{E}_{1}(t) + \mathscr{E}_{1}(0),
$$
we obtain
\begin{equation}\label{4eq4}
	\begin{split}
		\int_0^t \|\p_2 u\|_{L^2}^2 \; d\tau \lesssim \; & \mathscr{E}_{1}(0) +\mathscr{E}_{1}(t)+
		+ \mathscr{E}_{1}^\frac{3}{2}(t) + \mathscr{E}_{1}(t)\mathscr{E}_{2}^\frac{1}{2}(t)\\
		& + \mathscr{E}_{1}^\frac{1}{2}(t)\mathscr{E}_{2}(t)+\mathscr{E}_{1}^\frac{5}{4}(t)\mathscr{E}_{2}^{^\frac{1}{4}}(t)\\
		\lesssim \; & \mathscr{E}_{1}(0) +\mathscr{E}_{1}(t)+  \mathscr{E}_{1}^\frac{3}{2}(t) +\mathscr{E}_{2}^\frac{3}{2}(t){.}
	\end{split}
\end{equation}
Here we have used several H\"{o}lder's inequalities such as
\begin{align*}
	& \sup_{0 \leq \tau \leq t} \|b(\tau)\|_{H^2} \int_0^t \|\na_h b\|_{H^1}^2 \; d\tau \le \mathscr{E}_{1}^\frac{3}{2}(t), \\
	& \sup_{0 \leq \tau \leq t} \|u(\tau)\|_{H^2} \int_0^t \|\p_2 b\|_{L^2} \|\p_2 u\|_{L^2} \; d\tau \le  \mathscr{E}_{1}(t)\mathscr{E}_{2}^\frac{1}{2}(t) \le  \mathscr{E}_{1}^\frac{3}{2}(t) +\mathscr{E}_{2}^\frac{3}{2}(t).
\end{align*}
We now turn to the bound for the highest-order derivatives. By \eqref{er},
\begin{equation}\label{5eq6}
	\begin{split}
		\sum_{i=1}^3 \| \p_i^{3} \p_2 u \|_{L^2}^2
		= & \sum_{i=1}^3 \int_{\mathbb{R}^3} \p_i^{3} \Big(\p_t b +u\cdot\nabla b -\Delta_hb -b\cdot\nabla u \Big) \cdot  \p_i^{3}\p_2u dx\\
		=& \sum_{i=1}^3  \frac{d}{dt} \int_{\mathbb{R}^3} \p_i^{3}b \cdot \p_i^{3}\p_2 u dx + \sum_{i=1}^3 \int_{\mathbb{R}^3} \p_i^{3}\p_2 b \cdot \p_i^{3}\p_tu\, dx \\
		&  + \sum_{i=1}^3 \int_{\mathbb{R}^3} \p_i^{3} \Big( u\cdot\nabla b -\Delta_hb -b\cdot\nabla u \Big)\cdot \p_i^{3}\p_2u dx.
	\end{split}
\end{equation}
The estimates for the terms with $i = 1, 2$ (those containing $\p_1^{3}$ or $\p_2^{3}$ derivatives) are simple.
We focus on the terms with $i=3$ (those with the bad derivative $\p_3$), namely,
\begin{equation}\label{nn}
	\begin{split}
& \frac{d}{dt} \int_{\mathbb{R}^3} \p_3^{3}b \cdot \p_3^{3}\p_2 u dx + \int_{\mathbb{R}^3} \p_3^{3}\p_2 b \cdot \p_3^{3}u_t dx \\
& + \int_{\mathbb{R}^3} \p_3^{3} \Big( u\cdot\nabla b -\Delta_hb -b\cdot\nabla u \Big)\cdot \p_3^{3}\p_2u dx.
	\end{split}
\end{equation}
The second part of (\ref{nn}) can be handled as follows. By Lemma \ref{lem-anisotropic-est},
\begin{equation}\nonumber
	\begin{split}
		&\int_{\mathbb{R}^3} \p_3^{3}\p_2 b \cdot \p_3^{3} \p_t u\, dx \\
		= &\int_{\mathbb{R}^3} \p_3^{3}\p_2 b \cdot \p_3^{3}(\p_2b + b\cdot\nabla b -\nabla P +\p_1^2u-u\cdot\nabla u) dx \\
		\lesssim & \; \|\p_2 b\|_{H^{3}}{\big(}\|\p_2 b\|_{H^{3}} + \|\p_1 u\|_{H^4}{\big)} \\
		& + \|\p_2 b\|_{H^{3}}^\frac{1}{2} \|{\bf \p_3} \p_2 b\|_{H^{3}}^\frac{1}{2} {\big(}\|b\|_{H^{3}}^\frac{1}{2}\|{\bf \p_2} b\|_{H^{3}}^\frac{1}{2}
		\|b\|_{H^4}^\frac{1}{2}\|{\bf \p_1} b\|_{H^4}^\frac{1}{2} \\
		& + \|u\|_{H^{3}}^\frac{1}{2}\|{\bf \p_2} u\|_{H^{3}}^\frac{1}{2}
		\|u\|_{H^4}^\frac{1}{2}\|{\bf \p_1} u\|_{H^4}^\frac{1}{2}{\big)}.
	\end{split}
\end{equation}
The last part in (\ref{nn}) can be bounded by
\begin{equation}\nonumber
	\begin{split}
		& \int_{\mathbb{R}^3} \p_3^{3} \Big( u\cdot\nabla b -\Delta_hb -b\cdot\nabla u \Big) \cdot \p_3^{3}\p_2u dx \\
		\lesssim & \; \|\p_2 u\|_{H^{3}} \big( \|\na_h b\|_{H^4} + \|u\|_{H^{3}}^\frac{1}{4} \|{\bf \p_1} u\|_{H^{3}}^\frac{1}{4} \|{\bf \p_3} u\|_{H^{3}}^\frac{1}{4}\|{\bf \p_1 \p_3} u\|_{H^{3}}^\frac{1}{4} \|b\|_{H^4}^\frac{1}{2}\|{\bf \p_2} b\|_{H^4}^\frac{1}{2} \\
		& + \|b\|_{H^{3}}^\frac{1}{4} \|{\bf \p_2} b\|_{H^{3}}^\frac{1}{4} \|{\bf \p_3} b\|_{H^{3}}^\frac{1}{4}\|{\bf \p_2 \p_3} b\|_{H^{3}}^\frac{1}{4} \|u\|_{H^4}^\frac{1}{2}\|{\bf \p_1} u\|_{H^4}^\frac{1}{2}\big).
	\end{split}
\end{equation}
The terms with $i=1,2$ in \eqref{5eq6} are simpler and can be bounded similarly by applying
Lemma \ref{lem-anisotropic-est}. Inserting the bounds above in \eqref{5eq6} and integrating in time yields
\begin{equation}\label{5eq7}
	{\sum_{i=1}^3\int_0^t \|\p_i^3\p_2 u\|_{L^2}^2 }d\tau \lesssim \mathscr{E}_{1}(0) +\mathscr{E}_{1}(t)+  \mathscr{E}_{1}^\frac{3}{2}(t) +\mathscr{E}_{2}^\frac{3}{2}(t).
\end{equation}
Combining (\ref{4eq4}) and (\ref{5eq7}) gives (\ref{bb1}).  This finishes the proof for \eqref{lb}.
\end{proof}

\vskip .3in
\section{Proof of (\ref{hb})}
\label{proof-high}

\vskip .1in
This section proves the energy inequality in (\ref{hb}).

\begin{proof}[Proof of (\ref{hb})] Due to the norm equivalence (\ref{ne}), it suffices to bound
$$
	\sup_{0\leq \tau \leq t} \big(\|u\|_{L^2}^2 +\|b\|_{L^2}^2) + \int_0^t \Big( \|\p_1u\|_{L^2}^2 + \|\na_hb\|_{L^2}^2 \Big)d\tau
$$
and
\begin{equation}\nonumber
\sup_{0 \leq \tau \leq t} \sum_{i = 1}^3 \big( \|\p_i^{3}\omega\|_{L^2}^2 + \|\p_i^{3}H\|_{L^2}^2 \big)+\sum_{i = 1}^3 \int_0^t \Big( \|\p_i^{3}\p_1\omega\|_{L^2}^2 + \|\p_i^3\na_h H\|_{L^2}^2 \Big)d\tau,
\end{equation}
where $\omega = \nabla\times u$ and $ H = \nabla\times b$ are the vorticity and the current
density, respectively. As aforementioned, $\|\om\|_{L^2}= \|\na u\|_{L^2}$ and $\|H\|_{L^2} = \|\na b\|_{L^2}$.

\vskip .1in
Taking the inner product  of $(u, b)$ with the first two equations of \eqref{mhd}, integrating by parts and using $\na\cdot u=\na\cdot b=0$, and then integrating in time,  we find
\begin{equation}\label{4eq1}
	\|u\|_{L^2}^2 +\|b\|_{L^2}^2+2 \int_0^t \Big( \|\p_1u\|_{L^2}^2 + \|\na_hb\|_{L^2}^2 \Big)d\tau {=} \|u_0\|_{L^2}^2 + \|b_0\|_{L^2}^2.
\end{equation}
Applying the operator $\nabla\times$ to \eqref{mhd}, we  obtain the system governing $(\omega,H)$,
\begin{equation}\label{5eq1}
\begin{cases}
\p_t \omega + u \cdot \nabla \omega - \omega\cdot\nabla u-     \partial_1^2 \omega = b \cdot \nabla H - H\cdot\nabla b + \partial_2 H,\\
\p_t H + \nabla\times(u\cdot\nabla b)-\Delta_h H = \nabla\times(b\cdot\nabla u)+\p_2\omega.
\end{cases}
\end{equation}
Applying $\p_i^{3}$ with $i=1,2,3$ to \eqref{5eq1} and taking the inner product of $(\p_i^{3}\omega,\p_i^{3}H)$ with the resulting equations, we have, after integration by parts,
\begin{equation}\label{5eq2}
\begin{split}
  &\frac{1}{2}\frac{d}{dt} \sum_{i=1}^3 \Big[ \| \p_i^{3}\omega\|_{L^2}^2 + \| \p_i^{3}H\|_{L^2}^2\Big] + \sum_{i=1}^3 \Big[ \| \p_i^{3}\p_1 \omega\|_{L^2}^2 + \| \p_i^{3}\na_h H\|_{L^2}^2  \Big]\\
  = & \sum_{i=1}^3 \int_{\mathbb{R}^3} \p_i^{3} \big[ - u \cdot \nabla \omega + \omega\cdot\nabla u + b \cdot \nabla H - H\cdot\nabla b + \partial_2 H \big] \cdot \p_i^{3} \omega dx \\
  & + \sum_{i=1}^3  \int_{\mathbb{R}^3} \p_i^{3}\big[ -\nabla\times(u\cdot\nabla b)+ \nabla\times(b\cdot\nabla u)+\p_2\omega \big]\cdot \p_i^{3} H dx \\
  =& \;  I_1+I_2+I_3+I_4+I_5+I_6+I_7+I_8.
\end{split}
\end{equation}
For the first term $I_1$, it can be written into three parts,
$$\int_{\mathbb{R}^3} \p_1^{3} ( - u \cdot \nabla \omega)\cdot \p_1^{3} \omega dx, \; \int_{\mathbb{R}^3} \p_2^{3} ( - u \cdot \nabla \omega)\cdot \p_2^{3} \omega dx,\;
\int_{\mathbb{R}^3} \p_3^{3} ( - u \cdot \nabla \omega)\cdot \p_3^{3} \omega dx.$$
The first and second parts above behave good and can be bounded easily,
\begin{equation}\nonumber
\begin{split}
& \int_0^t \int_{\mathbb{R}^3} \p_1^{3} ( - u \cdot \nabla \omega)\cdot \p_1^{3} \omega dx \;d\tau + \int_0^t \int_{\mathbb{R}^3} \p_2^{3} ( - u \cdot \nabla \omega)\cdot \p_2^{3} \omega dx \;d\tau \\
& \lesssim \sup_{0 \leq \tau \leq t} \|u(\tau)\|_{H^4} \int_0^t \|\na_h \omega\|_{\dot H^{2}}^2 \; d\tau \lesssim \mathscr{E}^{\frac{3}{2}} (t).
\end{split}
\end{equation}
We then turn to the hard term $\int_{\mathbb{R}^3} \p_3^{3} ( - u \cdot \nabla \omega)\cdot \p_3^{3} \omega dx $ in $I_1$. It's difficult to control since there is no dissipation in the $x_3$ direction. We further decompose it into three parts,
\begin{equation*}\label{5eq3}
\begin{split}
&\int_{\mathbb{R}^3} \p_3^{3} ( - u \cdot \nabla \omega)\cdot \p_3^{3} \omega dx \\
= & \sum_{k = 1}^{3}\int_{\mathbb{R}^3}  - \mathcal{C}_{3}^k \p_3^k u \cdot \nabla \p_3^{3-k} \omega\cdot \p_3^{3} \omega dx   \\
=  &  -\Big\{ \sum_{k=2}^{3} \mathcal{C}_{3}^k  \int_{\mathbb{R}^3}\p_3^k u_h \cdot \na_h\p_3^{3-k}\omega \cdot \p_3^{3}\omega dx + 3\int_{\mathbb{R}^3}\p_3 u_h \cdot \na_h
\p_3^{2}\omega \cdot \p_3^{3}\omega dx \Big\} \\
& -   \sum_{k=2}^{3} \mathcal{C}_{3}^k \int_{\mathbb{R}^3}\p_3^k u_3\p_3^{4-k}\omega \cdot \p_3^{3}\omega dx - 3\int_{\mathbb{R}^3}\p_3u_3\p_3^{3}\omega \cdot \p_3^{3}\omega dx \\
=& \; I_{11}+ I_{12}+ I_{13}.
\end{split}
\end{equation*}
By Lemma \ref{lem-anisotropic-est},
\begin{equation}\nonumber
\begin{split}
  {|I_{11}| } \lesssim & \sum_{k=2}^{3} \|\p_3^k u_h\|_{L^2}^\frac{1}{2} \|{\bf \p_2} \p_3^k u_h\|_{L^2}^\frac{1}{2} \|\na_h \p_3^{3-k} \omega\|_{L^2}^\frac{1}{2} \|{\bf \p_3} \na_h \p_3^{3-k} \omega\|_{L^2}^\frac{1}{2} \|\p_3^{3}\omega\|_{L^2}^\frac{1}{2}\|{\bf \p_1}\p_3^{3}\omega\|_{L^2}^\frac{1}{2} \\
  & + \|\p_3 u_h\|_{L^2}^\frac{1}{4} \|{\bf \p_2} \p_3 u_h\|_{L^2}^\frac{1}{4} \|{\bf \p_3} \p_3u_h\|_{L^2}^\frac{1}{4}\|{\bf \p_2\p_3} \p_3u_h\|_{L^2}^\frac{1}{4}\|\na_h \p_3^{2} \omega\|_{L^2}\|\p_3^{3}\omega\|_{L^2}^\frac{1}{2}\|{\bf \p_1}\p_3^{3}\omega\|_{L^2}^\frac{1}{2}.
\end{split}
\end{equation}
Integrating in time and applying H\"{o}lder's inequality yields
\begin{equation}\nonumber
 \int_0^t       {|I_{11}| } d\tau \lesssim \mathscr{E}^{\frac{3}{2}} (t).
\end{equation}
Similarly,
\begin{equation}\nonumber
\begin{split}
 {I_{12} }= & \sum_{k=2}^{3} \mathcal{C}_{3}^k \int_{\mathbb{R}^3}\p_3^{k-1} \nabla_h \cdot  u_h \p_3^{4-k}\omega \cdot \p_3^{3}\omega dx \\
  \lesssim & \sum_{k=2}^{3} \|\p_3^{k-1} \nabla_h \cdot u_h\|_{L^2}^\frac{1}{2} \|{\bf \p_3} \p_3^{k-1} \nabla_h \cdot u_h\|_{L^2}^\frac{1}{2}\\
    &\cdot \|\p_3^{4-k} \omega\|_{L^2}^\frac{1}{2}\|{\bf \p_2}\p_3^{4-k} \omega\|_{L^2}^\frac{1}{2}\|\p_3^{3}\omega\|_{L^2}^\frac{1}{2}\|{\bf \p_1}\p_3^{3}\omega\|_{L^2}^\frac{1}{2}.
\end{split}
\end{equation}
Therefore,
\begin{equation}\nonumber
 \int_0^t       | { I_{12} }  | d\tau \lesssim \mathscr{E}^{\frac{3}{2}} (t).
\end{equation}
By the divergence free condition $\nabla\cdot u=0$,
we can further split $ I_{13} $ into two parts,
\begin{equation}\nonumber
\begin{split}
  {I_{13} }= -\int_{\mathbb{R}^3}\p_1u_1\p_3^{3}\omega\cdot  \p_3^{3}\omega dx- \int_{\mathbb{R}^3}\p_2u_2\p_3^{3}\omega\cdot  \p_3^{3}\omega dx
  =  \; {I_{131} + I_{132}}.
\end{split}
\end{equation}
Using integration by parts and Lemma \ref{lem-anisotropic-est}, we have
\begin{equation}\nonumber
\begin{split}
\int_0^t { I_{131}}\; d\tau = & \; 2 \int_0^t  \int_{\mathbb{R}^3} u_1\p_3^{3}\omega \cdot \p_1\p_3^{3}\omega dx \\
\lesssim & \int_0^t  \|u_1\|_{L^2}^\frac{1}{4} \|{\bf \p_2} u_1\|_{L^2}^\frac{1}{4} \|{\bf \p_3} u_1\|_{L^2}^\frac{1}{4} \|{\bf \p_2 \p_3} u_1\|_{L^2}^\frac{1}{4} \\
&\qquad \cdot \|\p_3^{3}\omega\|_{L^2}^\frac{1}{2} \|{\bf \p_1} \p_3^{3}\omega\|_{L^2}^\frac{1}{2} \|\p_1\p_3^{3}\omega\|_{L^2} \;d\tau \\
\lesssim & \; \mathscr{E}^\frac{3}{2}(t).
\end{split}
\end{equation}
 However $I_{132}$ can not be similarly treated as $ I_{131}$. Integration by parts would generate
 $\p_2 \p_3^3 \om$, which can not be bounded by either $\mathscr{E}_1$ or $\mathscr{E}_2$.  We call this trouble the derivative loss problem. How to overcome the derivative loss problem is the main challenge of our proof.
 By creating several new techniques, we are able to deal with this type of terms. This is done in Proposition \ref{lem-important}. Therefore, by Proposition \ref{lem-important} we have
 $$
 \int_0^t  I_{132}\; d\tau \lesssim \mathscr{E}^{\frac{3}{2}} (0) + \mathscr{E}^{\frac{3}{2}} (t) + \mathscr{E}^2 (t).
 $$
Collecting all the upper bounds for various parts of $I_1$, we obtain
\begin{equation}\label{I1}
\int_0^t  |I_1(\tau) | d\tau \lesssim \mathscr{E}^{\frac{3}{2}} (0) + \mathscr{E}^{\frac{3}{2}} (t) + \mathscr{E}^2 (t).
\end{equation}

\vskip .1in
We turn to the second term $I_2$.
Naturally we divide it into the following three parts,
$$ \int_{\mathbb{R}^3} \p_1^{3}(\omega \cdot\nabla u)\cdot \p_1^{3}\omega dx, \quad  \int_{\mathbb{R}^3} \p_2^{3}(\omega \cdot\nabla u)\cdot \p_2^{3}\omega dx \quad \text{and} \quad \int_{\mathbb{R}^3} \p_3^{3}(\omega \cdot\nabla u)\cdot \p_3^{3}\omega dx. $$
The first two parts above can be controlled easily like before,
\begin{equation}\nonumber
  \begin{split}
    \int_0^t \int_{\mathbb{R}^3} \na_h^{3}(\omega \cdot\nabla u)\cdot \na_h^{3}\omega \; dx dt \lesssim \int_0^t \|u\|_{H^3} \|\na_h u\|_{H^{3}}\|\na_h^{3} \omega\|_{L^2} \; d\tau \lesssim \mathscr{E}^\frac{3}{2}(t).
  \end{split}
\end{equation}
The last part is further split into three terms as follows,
\begin{equation}\label{5eq4}
\begin{split}
\int_{\mathbb{R}^3} \p_3^{3}(\omega \cdot\nabla u)\cdot \p_3^{3}\omega dx = &\int_{\mathbb{R}^3} \p_3^{3}(\omega_1 \p_1 u)\cdot \p_3^{3}\omega dx + \int_{\mathbb{R}^3} \p_3^{3}(\omega_2 \p_2 u)\cdot \p_3^{3}\omega dx \\
& + \int_{\mathbb{R}^3} \p_3^{3}(\omega_3 \p_3 u)\cdot \p_3^{3}\omega dx \\
= & \; I_{21} + I_{22} + I_{23}.
\end{split}
\end{equation}
The estimate for $I_{21}$ is not difficult. By integration by parts,
\begin{equation}\nonumber
  \begin{split}
    & \int_0^t I_{21}(\tau) \; d\tau =  \sum_{k = 0}^{2} \mathcal{C}_{3}^k \int_0^t \int_{\mathbb{R}^3} \p_3^k \omega_1 \p_1 \p_3^{3-k} u \cdot \p_3^{3}\omega \; dx d\tau\\
    &\qquad\qquad\qquad + \int_0^t \int_{\mathbb{R}^3} \p_3^{3} \omega_1 \p_1 u\cdot  \p_3^{3}\omega \; dx d\tau \\
    \lesssim & \sum_{k = 0}^{ 2}\int_0^t \|\p_3^k \omega_1\|_{L^2}^\frac{1}{2} \|{\bf \p_2} \p_3^k \omega_1\|_{L^2}^\frac{1}{2}\|\p_1 \p_3^{3-k} u\|_{L^2}^\frac{1}{2} \|{\bf \p_3}\p_1 \p_3^{3-k} u\|_{L^2}^\frac{1}{2}\\
    &\qquad \quad \cdot \|\p_3^{3}\omega\|_{L^2}^\frac{1}{2}\|{\bf \p_1}\p_3^{3}\omega\|_{L^2}^\frac{1}{2} \; d\tau \\
    & + \int_0^t \| u \|_{L^2}^{\frac{1}{4}} \| {\bf \p_2} u \|_{L^2}^{\frac{1}{4}} \| {\bf \p_3} u \|_{L^2}^{\frac{1}{4}} \| {\bf \p_2 \p_3} u \|_{L^2}^{\frac{1}{4}}  \|\p_1 \p_3^{3}\omega \|_{L^2}\\
    &\qquad \quad \cdot \|\p_3^{3}\omega \|_{L^2}^{\frac{1}{2}} \|{\bf \p_1}\p_3^{3}\omega \|_{L^2}^{\frac{1}{2}} \; d\tau\\
    \lesssim & \; \mathscr{E}^\frac{3}{2}(t).
  \end{split}
\end{equation}
$I_{22}$ contains a difficult term that has to be dealt with by Proposition \ref{lem-important}.
By Lemma \ref{lem-anisotropic-est} and Proposition \ref{lem-important},
\begin{equation}\nonumber
  \begin{split}
    &\int_0^t I_{22}(\tau) \; d\tau \\
    = & {\sum_{k = 0}^{2}} \mathcal{C}_{3}^k \int_0^t \int_{\mathbb{R}^3} \p_3^k \omega_2 \p_2 \p_3^{3-k} u \cdot \p_3^{3}\omega \; dx d\tau + \int_0^t \int_{\mathbb{R}^3} \p_3^{3} \omega_2 \p_2 u \cdot \p_3^{3}\omega \; dx d\tau\\
    \lesssim & \int_0^t \|\omega_2\|_{H^2}^\frac{1}{2} \|{\bf \p_2} \omega\|_{H^2}^\frac{1}{2} \|\p_2 u\|_{H^3} \|\p_3^{3}\omega\|_{L^2}^\frac{1}{2} \|{\bf \p_1}\p_3^{3}\omega\|_{L^2}^\frac{1}{2}\;d\tau  +{\mathscr{E}^{\frac{3}{2}} (0) } + \mathscr{E}^\frac{3}{2}(t) + \mathscr{E}^2(t) \\
    \lesssim & \; \mathscr{E}^{\frac{3}{2}} (0) +\mathscr{E}^\frac{3}{2}(t) + \mathscr{E}^2(t).
  \end{split}
\end{equation}
Now we come to deal with the last part in \eqref{5eq4}, i.e.,
\begin{equation}\nonumber
I_{23} = \sum_{k=1}^{3}\mathcal{C}_{3}^k \int_{\mathbb{R}^3} \p_3^k \omega_3 \p_3^{4-k} u \cdot \p_3^{3}\omega \; dx + \int_{\mathbb{R}^3} \omega_3 \p_3^4 u \cdot \p_3^{3}\omega \; dx = I_{231} + I_{232}.
\end{equation}
Due to the divergence free condition of $\omega$, $I_{231}$ is easy to control. By $\nabla \cdot \omega = 0$,
\begin{equation}\nonumber
  \begin{split}
    I_{231} = & - \sum_{k=1}^{2}\mathcal{C}_{3}^k \int_{\mathbb{R}^3} \p_3^{k-1}(\p_1 \omega_1 + \p_2 \omega_2) \p_3^{4-k} u \cdot \p_3^{3}\omega \; dx\\
    & - \int_{\mathbb{R}^3} \p_3^{2}(\p_1 \omega_1 + \p_2 \omega_2) \p_3 u \cdot \p_3^{3}\omega \; dx \\
    \lesssim & \; \|\na_h \omega\|_{H^{1}}^\frac{1}{2} \|{\bf \p_3} \na_h \omega\|_{H^{1}}^\frac{1}{2} \|u\|_{H^{3}}^\frac{1}{2} \|{\bf \p_2} u\|_{H^{3}}^\frac{1}{2}
    \|\p_3^{3}\omega\|_{L^2}^\frac{1}{2}\|{\bf \p_1}\p_3^{3}\omega\|_{L^2}^\frac{1}{2} \\
    & + \|\p_3^{2} \na_h \omega\|_{L^2} \|\p_3 u\|_{L^2}^\frac{1}{4}\|{\bf \p_2}\p_3 u\|_{L^2}^\frac{1}{4}\|{\bf \p_3}\p_3 u\|_{L^2}^\frac{1}{4}\|{\bf \p_2 \p_3}\p_3 u\|_{L^2}^\frac{1}{4} \|\p_3^{3}\omega\|_{L^2}^\frac{1}{2}\|{\bf \p_1}\p_3^{3}\omega\|_{L^2}^\frac{1}{2}.
  \end{split}
\end{equation}
 $I_{232}$ can not be bounded in the similar way. We write $\omega_3 = \p_1 u_2 -\p_2 u_1$ in $I_{232}$,
\begin{equation}\nonumber
\int_0^t I_{232}(\tau) \; d\tau = \int_0^t \int_{\mathbb{R}^3} (\p_1u_2-\p_2u_1)\p_3^4 u\cdot  \p_3^{3}\omega dx d\tau.
\end{equation}
The estimate for $ \int_0^t \int_{\mathbb{R}^3} \p_1u_2\p_3^4 u \cdot \p_3^{3}\omega dx d\tau$ is just similar to $\int_0^t \int_{\mathbb{R}^3}\p_3^{3}\omega_1 \p_1 u \cdot \p_3^{3}\omega dx d\tau$ in $I_{21}$.
We can apply integration by parts and Lemma \ref{lem-anisotropic-est} to control it by $\mathscr{E}^\frac{3}{2}(t)$. It remains difficult to deal with $ \int_0^t \int_{\mathbb{R}^3} \p_2u_1\p_3^4 u\cdot  \p_3^{3}\omega dx d\tau$ due to the derivative loss problem. We split this term into three parts again.
\begin{equation}\label{uu}
\begin{split}
\int_0^t \int_{\mathbb{R}^3} \p_2u_1 \p_3^4 u \p_3^{3}\omega dx d\tau=& \int_0^t \int_{\mathbb{R}^3}\p_2u_1 \p_3^4 u_1 \p_3^{3}\omega_1 dxd\tau\\
& +\int_0^t \int_{\mathbb{R}^3}\p_2u_1 \p_3^4 u_2\p_3^{3}\omega_2 dx d\tau \\
& +\int_0^t \int_{\mathbb{R}^3}\p_2u_1 \p_3^4 u_3 \p_3^{3}\omega_3 dxd\tau.
\end{split}
\end{equation}
The last part $\int_0^t \int_{\mathbb{R}^3}\p_2u_1 \p_3^4 u_3 \p_3^{3}\omega_3 dxd\tau$ can be controlled by $\mathscr{E}^\frac{3}{2}(t)$ easily by making use of the divergence free property of $u$.
{ The process is simpler than that for $I_{231}$}.
  Now, we focus on the first two parts in \eqref{uu}. We  write them as
\begin{equation}\label{UU}
\begin{split}
& \int_0^t \int_{\mathbb{R}^3}\p_2u_1 \p_3^4 u_1 \p_3^{3}\omega_1 dx d\tau +\int_0^t \int_{\mathbb{R}^3}\p_2u_1 \p_3^4 u_2\p_3^{3}\omega_2 dx d\tau\\
= &  \int_0^t \int_{\mathbb{R}^3}\p_2u_1 \p_3^{3}\big(\p_3u_1-\p_1u_3 +\p_1u_3\big) \p_3^{3}\omega_1 dx d\tau \\
& +\int_0^t \int_{\mathbb{R}^3}\p_2u_1 \p_3^{3} \big(\p_3u_2-\p_2u_3+\p_2u_3 \big)\p_3^{3}\omega_2 dx d\tau\\
= & \int_0^t \int_{\mathbb{R}^3}\p_2u_1 \p_3^{3}\omega_2 \p_3^{3}\omega_1 dx  -\int_0^t  \int_{\mathbb{R}^3}\p_2u_1 \p_3^{3}\omega_1 \p_3^{3}\omega_1 dx d\tau\\
& + \int_0^t \int_{\mathbb{R}^3}\p_2u_1 \p_3^{3} \p_1u_3  \p_3^{3}\omega_1 dx+ \int_0^t \int_{\mathbb{R}^3}\p_2u_1 \p_3^{3} \p_2u_3  \p_3^{3}\omega_1 dx d\tau.
\end{split}
\end{equation}
The first two terms  on the right hand side of \eqref{UU} can be handled by Proposition \ref{lem-important} while  the rest two terms by Sobolev's inequality,
\begin{align*}
&\int_0^t \int_{\mathbb{R}^3}\p_2u_1 \p_3^{3}\omega_2 \p_3^{3}\omega_1 dx  -\int_0^t  \int_{\mathbb{R}^3}\p_2u_1 \p_3^{3}\omega_1 \p_3^{3}\omega_1 dx d\tau \lesssim \mathscr{E}^{\frac{3}{2}} (0) +\mathscr{E}^{\frac{3}{2}} (t) + \mathscr{E}^2 (t), \\
&\int_0^t \int_{\mathbb{R}^3}\p_2u_1 \p_3^{3} \p_1u_3  \p_3^{3}\omega_1 dx+ \int_0^t \int_{\mathbb{R}^3}\p_2u_1 \p_3^{3} \p_2u_3  \p_3^{3}\omega_1 dx d\tau \lesssim \mathscr{E}^{\frac{3}{2}} (t).
\end{align*}
 Taking all the inequalities above into consideration we then obtain
\begin{equation}\label{I2}
\int_0^t  |I_2(\tau) | d\tau \lesssim \mathscr{E}^{\frac{3}{2}} (0) +\mathscr{E}^{\frac{3}{2}} (t) + \mathscr{E}^2 (t).
\end{equation}
We shall combine the estimates of $I_3$ and $I_7$ to take advantage of cancellations.
Naturally $I_3 = \sum_{i=1}^3 \int_{\mathbb{R}^3} \p_i^{3} (b\cdot\nabla H)\cdot \p_i^{3}\omega dx $ is divided into the following three terms,
$$ \int_{\mathbb{R}^3} \p_1^{3} (b\cdot\nabla H)\cdot \p_1^{3}\omega dx, \quad \int_{\mathbb{R}^3} \p_2^{3} (b\cdot\nabla H)\cdot \p_2^{3}\omega dx \quad \text{and}\,\, \int_{\mathbb{R}^3} \p_3^{3} (b\cdot\nabla H)\cdot \p_3^{3}\omega dx .$$
Each term above can be split into two parts as follows.
$$ I_{31} + \tilde{I}_{31} = \sum_{k = 1}^{3}\mathcal{C}_{3}^k \int_{\mathbb{R}^3}\p_1^k b\cdot\nabla \p_1^{3-k}H \cdot \p_1^{3}\omega dx + \int_{\mathbb{R}^3} b\cdot\nabla \p_1^{3} H\cdot  \p_1^{3}\omega dx, $$
$$ I_{32} + \tilde{I}_{32} =\sum_{k = 1}^{3}\mathcal{C}_{3}^k \int_{\mathbb{R}^3}\p_2^k b\cdot\nabla \p_2^{3-k}H \cdot \p_2^{3}\omega dx + \int_{\mathbb{R}^3} b\cdot\nabla \p_2^{3} H \cdot \p_2^{3}\omega dx, $$
$$ I_{33} + \tilde{I}_{33} =\sum_{k = 1}^{3}\mathcal{C}_{3}^k \int_{\mathbb{R}^3}\p_3^k b\cdot\nabla \p_3^{3-k}H \cdot \p_3^{3}\omega dx + \int_{\mathbb{R}^3} b\cdot\nabla \p_3^{3} H \cdot \p_3^{3}\omega dx. $$
$I_{31}$ and $I_{32}$ have the good derivatives $\na_h$ and can be bounded directly,
\begin{equation}\nonumber
I_{31} + I_{32} \lesssim  \|\na_h b\|_{H^2} \|b\|_{H^4} \|\na_h \omega\|_{H^{2}}.
\end{equation}
To deal with $I_{33}$,  we also use the divergence free property $\nabla \cdot b = 0$ to write
\begin{equation}\nonumber
\begin{split}
I_{33} = & \sum_{k = 1}^{3}\mathcal{C}_{3}^k \int_{\mathbb{R}^3}\p_3^k b_h \cdot\nabla_h \p_3^{3-k}H \cdot \p_3^{3}\omega dx - \sum_{k = 1}^{3}\mathcal{C}_{3}^k \int_{\mathbb{R}^3}\p_3^{k-1}\nabla _h \cdot b_h\p_3^{4-k}H \cdot \p_3^{3}\omega dx.
\end{split}
\end{equation}
We have converted some of $\p_3$ into the good derivatives $\na_h$. By Lemma \ref{lem-anisotropic-est},
\begin{equation}\nonumber
  \begin{split}
  	 I_{33} \lesssim & (\|b_h\|_{H^{3}}^\frac{1}{2}\|{\bf \p_2} b_h\|_{H^{3}}^\frac{1}{2}\|\na_h H\|_{H^{2}}^\frac{1}{2}\|{\bf \p_3}\na_h H\|_{H^{2}}^\frac{1}{2}
  	{+\|\na_h b\|_{H^{2}}^\frac{1}{2}\|{\bf \p_3} \na_h b\|_{H^{2}}^\frac{1}{2}\| H\|_{H^{3}}^\frac{1}{2}
  		\|{\bf \p_2} H\|_{H^{3}}^\frac{1}{2}})\\
  	&\cdot\|\p_3^{3}\omega\|_{L^2}^\frac{1}{2}\|{\bf \p_1}\p_3^{3}\omega\|_{L^2}^\frac{1}{2}.
  \end{split}
\end{equation}
The remaining terms $\tilde{I}_{31}$, $\tilde{I}_{32}$ and $\tilde{I}_{33}$ can not be controlled directly,  but will be canceled
by the corresponding terms in  $I_7$. Now let's focus on $I_7= \sum_{i=1}^3 \int_{\mathbb{R}^3} \p_i^{3} [ \nabla \times(b\cdot\nabla u)]\cdot\p_i^{3}H dx $, we notice that
$$
\nabla \times(b\cdot\nabla u)=b\cdot\nabla \omega + \mathcal{R}
$$
where $\mathcal{R}$ stands for the vector with its $i$-th component given by
$$
\mathcal{R}_i = \sigma_{ijk} \p_j b\cdot \nabla u_k.
$$
Here $\sigma_{ijk}$ is the Levi-Cevita symbol,
\begin{equation}\label{sigma}
  \sigma_{ijk} = \left\{
                   \begin{array}{ll}
                     1, & \hbox{$ijk=123,231,312;$} \\
                     -1, & \hbox{$ijk=321,213,132;$} \\
                     0, & \hbox{otherwise.}
                   \end{array}
                 \right.
\end{equation}
Following  the process for $I_3$, we can split $I_7$ into the following nine parts,
\begin{equation}\nonumber
\begin{split}
   & I_{71} + \tilde{I}_{71} + \tilde{\tilde{I}}_{71} \\
  = &  \sum_{k = 1}^{3}\mathcal{C}_{3}^k\int_{\mathbb{R}^3} \p_1^k b\cdot\nabla \p_1^{3-k} \omega\cdot  \p_1^{3}H dx + \int_{\mathbb{R}^3} b\cdot\nabla \p_1^{3} \omega \cdot \p_1^{3}H dx + \int_{\mathbb{R}^3} \p_1^{3} ( \mathcal{R} )\cdot \p_1^{3}H dx,
\end{split}
\end{equation}
\begin{equation}\nonumber
\begin{split}
   & I_{72} + \tilde{I}_{72} + \tilde{\tilde{I}}_{72} \\
  = &  \sum_{k = 1}^{3}\mathcal{C}_{3}^k\int_{\mathbb{R}^3} \p_2^k b\cdot\nabla \p_2^{3-k} \omega \cdot \p_2^{3}H dx + \int_{\mathbb{R}^3} b\cdot\nabla \p_2^{3} \omega\cdot  \p_2^{3}H dx + \int_{\mathbb{R}^3} \p_2^{3} ( \mathcal{R} )\cdot \p_2^{3}H dx,
\end{split}
\end{equation}
\begin{equation}\nonumber
\begin{split}
   & I_{73} + \tilde{I}_{73} + \tilde{\tilde{I}}_{73} \\
  = &  \sum_{k = 1}^{3}\mathcal{C}_{3}^k\int_{\mathbb{R}^3} \p_3^k b\cdot\nabla \p_3^{3-k} \omega\cdot \p_3^{3}H dx + \int_{\mathbb{R}^3} b\cdot\nabla \p_3^{3} \omega \cdot \p_3^{3}H dx + \int_{\mathbb{R}^3} \p_3^{3} ( \mathcal{R} )\cdot \p_3^{3}H dx .
\end{split}
\end{equation}
By integration by parts and  $\na\cdot b=0$,
$$ \tilde{I}_{31} + \tilde{I}_{71}=0, \quad \tilde{I}_{32} + \tilde{I}_{72}=0 \quad \text{and} \quad  \tilde{I}_{33} + \tilde{I}_{73}=0.$$
$I_{71}$ and $I_{72}$ are easy to control while
$I_{73}$ can be written as
\begin{equation}\nonumber
\begin{split}
I_{73} = \sum_{k = 1}^{3}\mathcal{C}_{3}^k\int_{\mathbb{R}^3} \p_3^k b_h \cdot \nabla_h \p_3^{3-k} \omega\cdot \p_3^{3}H dx - \sum_{k = 1}^{3}\mathcal{C}_{3}^k\int_{\mathbb{R}^3} \p_3^{k-1}\nabla_h \cdot b_h \p_3^{4-k} \omega\cdot \p_3^{3}H dx.
\end{split}
\end{equation}
Now $I_{71}, I_{72}$ and $I_{73}$ all contain good derivatives $\na_h$ and can be handled exactly
like $I_{31}, I_{32}$ and $I_{33}$.
We omit the repeated details.
 Let's focus on the remaining terms containing $\mathcal{R}$, namely $\tilde{\tilde{I}}_{71}$, $\tilde{\tilde{I}}_{72}$ and $\tilde{\tilde{I}}_{73}$.
By H\"{o}lder's inequality and Sobolev embedding theorem,
\begin{equation}\nonumber
  \begin{split}
    \tilde{\tilde{I}}_{71} + \tilde{\tilde{I}}_{72} \lesssim {\big(} \|\na_h b\|_{H^{3}} \|u\|_{H^4} + \|\na_h u\|_{H^{3}} \|b\|_{H^4} {\big)}\|\na_h H\|_{H^{3}}.
  \end{split}
\end{equation}
$\tilde{\tilde{I}}_{73}$ is more complex and is further split  into two parts,
\begin{equation}\nonumber
  \begin{split}
    \tilde{\tilde{I}}_{73} = & \sum_{i=1}^3\int_{\mathbb{R}^3} \p_3^{3} (\sigma_{ijk} \p_j b_h \cdot \nabla_h u_k) \p_3^{3}H_i \;dx  + \int_{\mathbb{R}^3} \p_3^{3} (\sigma_{3jk} \p_j b_3 \p_3  u_k) \p_3^{3}H_3 \;dx  \\
    &+ \sum_{i \neq 3} \int_{\mathbb{R}^3} \p_3^{3} (\sigma_{ijk} \p_j b_3 \p_3 u_k) \p_3^{3}H_i \;dx = \tilde{\tilde{I}}_{731} + \tilde{\tilde{I}}_{732}.
  \end{split}
\end{equation}
Noticing $H_3 = \p_1 b_2 - \p_2 b_1$ and applying Lemma \ref{lem-anisotropic-est},  we can bound $\tilde{\tilde{I}}_{731}$ by
\begin{equation}\nonumber
  \begin{split}
    \tilde{\tilde{I}}_{731} \lesssim & \|\na_h u\|_{H^{3}} \|b_h\|_{H^{3}}^\frac{1}{4}\|{\bf \p_2}b_h\|_{H^{3}}^\frac{1}{4} \|{\bf \p_3}b_h\|_{H^{3}}^\frac{1}{4}
    \|{\bf \p_2\p_3}b_h\|_{H^{3}}^\frac{1}{4}   \|\p_3^{3}H\|_{L^2}^\frac{1}{2} \|{\bf \p_1}\p_3^{3} H\|_{L^2}^\frac{1}{2} \\
    & + {\|\na_h u\|_{L^{2}}^\frac{1}{2}\|{\bf \p_3} \na_h u\|_{L^{2}}^\frac{1}{2}} \|b_h\|_{H^4}^\frac{1}{2}\|{\bf \p_2}b_h\|_{H^4}^\frac{1}{2}\|\p_3^{3} H\|_{L^2}^\frac{1}{2}
    \|{\bf \p_1}\p_3^{3} H\|_{L^2}^\frac{1}{2}\\
    & + \|b_3 \|_{H^4}^\frac{1}{2} \|{\bf \p_2} b_3\|_{H^4}^\frac{1}{2} \|u\|_{H^4}^\frac{1}{2} \|{\bf \p_1} u\|_{H^4}^\frac{1}{2} \|\na_h b\|_{H^{3}}^\frac{1}{2} \|{\bf \p_3} \na_h b\|_{H^{3}}^\frac{1}{2}.
  \end{split}
\end{equation}
By the definition of $\sigma_{ijk}$, if $i\neq 3$, we will have $j=3$ or $k=3$. Indeed,
\begin{equation}\nonumber
  \begin{split}
    \tilde{\tilde{I}}_{732} \lesssim & \|\p_3 b_3\|_{H^{3}}^\frac{1}{2}\|{\bf \p_3}\p_3 b_3\|_{H^{3}}^\frac{1}{2}\|u\|_{H^4}^\frac{1}{2}\|{\bf \p_1}u\|_{H^4}^\frac{1}{2}
    \|\p_3^{3}H\|_{L^2}^\frac{1}{2}\|{\bf \p_2}\p_3^{3}H\|_{L^2}^\frac{1}{2} \\
    & +  \|\p_3 u_3\|_{H^{2}}^\frac{1}{2}\|{\bf \p_3}\p_3 u_3\|_{H^{2}}^\frac{1}{2}\|b_3\|_{H^4}^\frac{1}{2}\|{\bf \p_1}b_3\|_{H^4}^\frac{1}{2}
    \|\p_3^{3}H\|_{L^2}^\frac{1}{2}\|{\bf \p_2}\p_3^{3}H\|_{L^2}^\frac{1}{2} \\
    & + \|\p_3 u_3 \|_{H^{3}} \|b_3\|_{H^1}^\frac{1}{4}\|{\bf \p_1}b_3\|_{H^1}^\frac{1}{4} \|{\bf \p_3}b_3\|_{H^1}^\frac{1}{4}\|{\bf \p_1 \p_3}b_3\|_{H^1}^\frac{1}{4}\|\p_3^{3}H\|_{L^2}^\frac{1}{2}\|{\bf \p_2}\p_3^{3}H\|_{L^2}^\frac{1}{2}.
  \end{split}
\end{equation}
Collecting the upper bounds for $I_3$ and $I_7$ above, we find
\begin{equation}\label{I3I7}
\int_0^t  |I_3(\tau) + I_7(\tau) |    d\tau \lesssim \mathscr{E}^{\frac{3}{2}} (t).
\end{equation}
Next we deal with $I_4$, which is naturally divided into the following three terms
$$
- \int_{\mathbb{R}^3} \p_1^3(H\cdot\nabla b)\cdot \p_1^3\omega dx, \quad - \int_{\mathbb{R}^3} \p_2^3 (H\cdot\nabla b)\cdot \p_2^3\omega dx, \quad - \int_{\mathbb{R}^3} \p_3^3 (H\cdot\nabla b)\cdot \p_3^3\omega dx.
$$
The first two terms already contain the good derivatives $\na_h$ and can be bounded directly. The third term is further decomposed into three terms
\begin{equation}\nonumber
\begin{split}
 - \int_{\mathbb{R}^3} \p_3^{3} (H\cdot\nabla b)\cdot \p_3^{3}\omega dx =&  - \int_{\mathbb{R}^3} \p_3^{3} (H_1 \p_1 b)\cdot \p_3^{3}\omega dx  - \int_{\mathbb{R}^3} \p_3^{3} (H_2 \p_2 b)\cdot \p_3^{3}\omega dx \\
 &- \int_{\mathbb{R}^3} \p_3^{3} (H_3 \p_3 b)\cdot \p_3^{3}\omega dx.
\end{split}
\end{equation}
The first two terms above have either $\p_1 b$ or $\p_2b$ and can thus be bounded
by applying Lemma \ref{lem-anisotropic-est}.  The third term involves $H_3 = \p_1b_2-\p_2b_1$ and thus also
contains the good horizontal derivatives on $b$. Therefore, all of them admit suitable upper bounds and
\begin{equation}\label{I4}
\int_0^t  |I_4(\tau)  |    d\tau \lesssim \mathscr{E}^{\frac{3}{2}} (t).
\end{equation}
$I_5$ and $I_8$ cancel each other by integration by parts,
\begin{equation}\label{I5I8}
I_5 + I_8 = \sum_{i=1}^3  \Big[ \int_{\mathbb{R}^3} \p_i^{3}\p_2H \cdot \p_i^{3} \omega dx + \int_{\mathbb{R}^3} \p_i^{3}\p_2 \omega\cdot  \p_i^{3} H dx \Big] =0.
\end{equation}
It remains to deal with $I_6$. As in $I_7$, we can write $\nabla \times (u\cdot \nabla b) = u \cdot \nabla H +  \tilde {\mathcal{R}}$, where $\tilde {\mathcal{R}_i} = \sigma_{ijk}\p_j u \cdot \nabla b_k$ and $\sigma_{ijk}$ is defined in \eqref{sigma}. Thus,
\begin{equation}\nonumber
\begin{split}
 I_6 = & -\sum_{k = 1}^{3}\mathcal{C}_{3}^k\int_{\mathbb{R}^3} \p_1^k u \cdot \nabla \p_1^{3-k}H\cdot  \p_1^{3}H dx - \int_{\mathbb{R}^3} \p_1^{3}(\tilde {\mathcal{R}}) \cdot \p_1^{3} H dx\\
 & -\sum_{k = 1}^{3}\mathcal{C}_{3}^k\int_{\mathbb{R}^3} \p_2^k u \cdot \nabla \p_2^{3-k}H \cdot \p_2^{3}H dx - \int_{\mathbb{R}^3} \p_2^{3}(\tilde {\mathcal{R}})\cdot  \p_2^{3} H dx\\
 & -\sum_{k = 1}^{3}\mathcal{C}_{3}^k\int_{\mathbb{R}^3} \p_3^k u_h \cdot \nabla_h \p_3^{3-k}H \cdot \p_3^{3}H dx -\sum_{k = 1}^{3}\mathcal{C}_{3}^k\int_{\mathbb{R}^3} \p_3^k u_3  \p_3^{4-k}H \cdot \p_3^{3}H dx \\
 & - \int_{\mathbb{R}^3} \p_3^{3}(\tilde {\mathcal{R}}) \cdot \p_3^{3} H dx
\\
= & \; I_{61}+I_{62} + I_{63} + I_{64}.
\end{split}
\end{equation}
By H\"{o}lder's inequality and Sobolev embedding theorem,
\begin{equation}\nonumber
  \begin{split}
    I_{61}+I_{62} \lesssim& \|\na_h u\|_{H^{3}} \|H\|_{H^3} \|\na_h H\|_{H^{3}}\\
    & + \big(\|\na_h u\|_{H^3} \|b\|_{H^4} + {\|u\|_{H^2}} \|\na_h b\|_{H^3} \big) \|\na_h H\|_{H^{3}}.
  \end{split}
\end{equation}
The estimate for $I_{63}$ is similar to that for $I_{73}$,
\begin{equation}\nonumber
  \begin{split}
    I_{63} \lesssim & \|u_h\|_{H^{3}}^\frac{1}{2} \|{\bf \p_1} u_h\|_{H^{3}}^\frac{1}{2}\|\na_h H\|_{H^{2}}^\frac{1}{2} \|{\bf \p_3} \na_h H\|_{H^{2}}^\frac{1}{2} \|\p_3^{3} H\|_{L^2}^\frac{1}{2} \|{\bf \p_2} \p_3^{3} H\|_{L^2}^\frac{1}{2} \\
    & + \|\nabla_h \cdot u_h\|_{H^{2}}^\frac{1}{2} \|{\bf \p_3} \nabla_h \cdot u_h\|_{H^{2}}^\frac{1}{2} \|H\|_{H^{3}}  \|{\bf \na_h} H\|_{H^{3}}.
  \end{split}
\end{equation}
$I_{64}$ is bounded similarly as $\tilde{\tilde{I}}_{73}$
\begin{equation}\nonumber
\begin{split}
  I_{64}  \lesssim & \; \|u_h\|_{H^4}^\frac{1}{2} \|{\bf \p_1} u_h\|_{H^4}^\frac{1}{2} \|\na_h b\|_{H^3}^\frac{1}{2} \|{\bf \p_3} \na_h b\|_{H^3}^\frac{1}{2} \|\p_3^{3}H\|_{L^2}^\frac{1}{2}\|{\bf \p_2}\p_3^{3}H\|_{L^2}^\frac{1}{2}\\
   &+ \|u_3 \|_{H^4}^\frac{1}{2} \|{\bf \p_1} u_3\|_{H^4}^\frac{1}{2} \|b\|_{H^4}^\frac{1}{2} \|{\bf \p_2} b\|_{H^4}^\frac{1}{2} \|\na_h b\|_{H^{3}}^\frac{1}{2} \|{\bf \p_3} \na_h b\|_{H^{3}}^\frac{1}{2} \\
  & +  \|\p_3 b_3\|_{H^{3}}^\frac{1}{2}\|{\bf \p_3}\p_3 b_3\|_{H^{3}}^\frac{1}{2}\|u\|_{H^4}^\frac{1}{2}\|{\bf \p_1}u\|_{H^4}^\frac{1}{2}
    \|\p_3^{3}H\|_{L^2}^\frac{1}{2}\|{\bf \p_2}\p_3^{3}H\|_{L^2}^\frac{1}{2} \\
    & +  \|\p_3 u_3\|_{H^{2}}^\frac{1}{2}\|{\bf \p_3}\p_3 u_3\|_{H^{2}}^\frac{1}{2}\|b\|_{H^4}^\frac{1}{2}\|{\bf \p_1}b\|_{H^4}^\frac{1}{2}
    \|\p_3^{3}H\|_{L^2}^\frac{1}{2}\|{\bf \p_2}\p_3^{3}H\|_{L^2}^\frac{1}{2} \\
    & + \|\p_3 u_3 \|_{H^{3}} \|b\|_{H^1}^\frac{1}{4}\|{\bf \p_1}b\|_{H^1}^\frac{1}{4} \|{\bf \p_3}b\|_{H^1}^\frac{1}{4}\|{\bf \p_1 \p_3}b\|_{H^1}^\frac{1}{4}\|\p_3^{3}H\|_{L^2}^\frac{1}{2}\|{\bf \p_2}\p_3^{3}H\|_{L^2}^\frac{1}{2}.
\end{split}
\end{equation}
Therefore,
\begin{equation}\label{I6}
\int_0^t  |I_6(\tau)  |    d\tau \lesssim \mathscr{E}^{\frac{3}{2}} (t).
\end{equation}
Integrating \eqref{5eq2} in time on the interval $[0,t]$ and invoking
the upper bounds in \eqref{I1}, \eqref{I2}, \eqref{I3I7}, \eqref{I4}, \eqref{I5I8} and \eqref{I6}, we obtain
\begin{equation}\label{5eq5}
\sup_{{0\le\tau\le t}} \big( \|u\|_{\dot H^4}^2 + \|b\|_{\dot H^4}^2 \big)+\int_0^t \Big( \|\p_1u\|_{\dot H^4}^2 + \|\na_h b\|_{\dot H^4}^2 \Big) d\tau    \lesssim \mathscr{E}(0) + \mathscr{E}^{\frac{3}{2}} (0) +\mathscr{E} ^{\frac{3}{2}}(t)+ \mathscr{E}^2(t).
\end{equation}
Adding (\ref{4eq1}) and (\ref{5eq5}) yields the desired inequality in (\ref{hb}), namely
$$
		\mathscr{E}_1(t)\lesssim \mathscr{E}(0) + \mathscr{E}^{\frac{3}{2}} (0) +\mathscr{E} ^{\frac{3}{2}}(t)+ \mathscr{E}^2(t).
$$
This finishes the proof.
\end{proof}

\vskip .3in
\section{Proof of Proposition \ref{lem-important}}
\label{proof-lem}

This section is devoted to the proof of Proposition \ref{lem-important}, which provides suitable upper bounds
for the time integral of the interaction terms. We have used Proposition \ref{lem-important} extensively in
the crucial energy estimates in the previous sections.

\vskip .1in
The proof of Proposition \ref{lem-important} deals with some of the most difficult terms emanating from
the velocity nonlinearity. We take out two of the wildest terms
and deal with them in the following lemma. We will state and prove this lemma, and then prove
Proposition \ref{lem-important}.

\begin{lemma}\label{lem-assistant}
	Let $(u, b)\in H^4$ be a solution of (\ref{mhd}). Let $\omega = \nabla \times u$ and $H = \nabla \times b$
	be the corresponding vorticity and current density, respectively. Let $\mathscr{E}(t)$ be defined as in  \eqref{ed}. Then, for all $i,j,k \in \{1,2,3\}$,
	$$ \left| \int_0^t \int_{\mathbb{R}^3} b_j \p_t(\p_3^{3}\omega_i \p_3^{3}\omega_k) \; dxd\tau \right|,  \,
	\left| \int_0^t \int_{\mathbb{R}^3} \p_2 u_j \p_t(\p_3^{3}\omega_i\p_3^{3}\omega_k) \;dx d\tau \right|    \lesssim {\mathscr{E}^{\frac{3}{2}}(0)}+ \mathscr{E}^{\frac{3}{2}}(t) + \mathscr{E}^2 (t).$$
\end{lemma}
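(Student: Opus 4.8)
The plan is to differentiate the product and feed in the evolution equations. Writing
\[
\p_t(\p_3^{3}\om_i\,\p_3^{3}\om_k)=\p_3^{3}(\p_t\om_i)\,\p_3^{3}\om_k+\p_3^{3}\om_i\,\p_3^{3}(\p_t\om_k)
\]
and substituting the vorticity equation \eqref{5eq1}, which yields $\p_t\om_\ell=-u\cdot\na\om_\ell+\om\cdot\na u_\ell+\p_1^2\om_\ell+b\cdot\na H_\ell-H\cdot\na b_\ell+\p_2 H_\ell$, I would multiply by the coefficient ($b_j$ for the first integral, $\p_2 u_j$ for the second) and integrate over $\mathbb{R}^3\times[0,t]$. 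Sorting the resulting terms by nonlinear degree, the four quadratic source terms (transport, vortex stretching, and the two magnetic terms $b\cdot\na H$ and $H\cdot\na b$) produce quartic space--time integrals to be matched against $\mathscr{E}^2(t)$, while the two linear source terms ($\p_1^2\om$ and the coupling $\p_2 H$) produce cubic integrals to be matched against $\mathscr{E}^{\frac32}(t)$.

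The routine contributions are dispatched by Lemma \ref{lem-anisotropic-est} after at most one integration by parts. The $\p_1^2\om$ term is integrated by parts in $x_1$, exposing the time-integrable factor $\p_1\p_3^3\om\lesssim\|\p_1 u\|_{H^4}$. The transport and stretching terms are first antisymmetrized via $-u\cdot\na\p_3^3\om_i\,\p_3^3\om_k-u\cdot\na\p_3^3\om_k\,\p_3^3\om_i=-u\cdot\na(\p_3^3\om_i\,\p_3^3\om_k)$, after which $\na\cdot u=0$ lets me integrate by parts and shift the transport derivative onto the coefficient, turning an apparent cubic term into a genuine quartic one; the lower-order Leibniz remainders all carry a horizontal derivative on some factor and fall directly under the anisotropic inequalities. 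Throughout I would use $\na\cdot u=\na\cdot b=0$ together with $\om_3=\p_1 u_2-\p_2 u_1$ and $H_3=\p_1 b_2-\p_2 b_1$ to convert any $\p_3$ falling on a third component into good horizontal derivatives.

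The heart of the matter is the top-order part of the magnetic term $b\cdot\na\p_3^3 H$ and the coupling term $\p_2 H$. Splitting $b\cdot\na=b_h\cdot\na_h+b_3\p_3$ removes the horizontal piece (which carries the time-integrable $\na_h\p_3^3 H\lesssim\|\na_h b\|_{H^4}$) and leaves the dangerous vertical piece $b_3\,\p_3^4 H$ paired with $\p_3^3\om$; likewise $\p_2 H$ leaves a bare $\p_3^3\om$ that is only supremum-bounded. Here the derivative loss is genuine: integrating by parts in $x_3$ (respectively $x_2$) merely relocates the excess derivative to $\p_3^4\om$ (respectively $\p_2\p_3^3\om$), neither of which is stored in $\mathscr{E}$, since $\mathscr{E}_2$ controls $\p_2 u$ only in $H^3$. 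My remedy is to build artificial symmetries: to the unpaired combination $\p_3^3\om_i\,(b\cdot\na\p_3^3 H_k)+\p_3^3\om_k\,(b\cdot\na\p_3^3 H_i)$ I would add and subtract the companion terms formed from $\p_3^3 H_i,\p_3^3 H_k$ so as to complete the total derivative $b\cdot\na(\p_3^3 H_i\,\p_3^3 H_k)$; integrating this by parts against $\na\cdot b=0$ dumps the offending derivative onto the low-order coefficient $b_j$, producing the all-$b$ quartic $(b\cdot\na b_j)\,\p_3^3 H_i\,\p_3^3 H_k$, which is controlled by the four-function inequality of Lemma \ref{lem-anisotropic-est} precisely because $b$ and $H$ both possess time-integrable horizontal derivatives. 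The subtracted compensation terms involve the differences $\p_3^3\om_\ell-\p_3^3 H_\ell$, which remain harmless because the horizontal derivative $\p_1$ is time-integrable for both $u$ (via $\p_1 u\in H^4$) and $b$ (via $\na_h b\in H^4$); in particular no factor is ever asked to supply more than $H^3$ of $\p_2 u$, and the analogous completion of a $\p_2$-total derivative disposes of the coupling term $\p_2 H$. This symmetrization step, flagged in the introduction as the crux, is where essentially all the difficulty resides.

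The second integral, carrying the coefficient $\p_2 u_j$ in place of $b_j$, is handled by the identical expansion and symmetrization; only the bookkeeping differs. On the one hand $\p_2 u_j$ is itself one of the quantities whose time integral is stored in $\mathscr{E}_2$, which furnishes time integrability for free in several places; on the other hand it already carries one derivative that must be redistributed through the anisotropic inequalities without ever demanding more than $H^3$ regularity on a $\p_2 u$ factor. Finally, whenever the symmetrization produces a total time derivative $\frac{d}{d\tau}\int(\cdots)\,dx$, integrating it on $[0,t]$ leaves supremum-bounded cubic boundary contributions at $\tau=0$ and $\tau=t$; these account for the $\mathscr{E}^{\frac32}(0)$ term and part of the $\mathscr{E}^{\frac32}(t)$ term. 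Collecting all the estimates then yields the asserted bound $\mathscr{E}^{\frac32}(0)+\mathscr{E}^{\frac32}(t)+\mathscr{E}^2(t)$ for both integrals.
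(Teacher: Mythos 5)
Your overall framework (expand $\p_t(\p_3^3\om_i\p_3^3\om_k)$ via the vorticity equation, symmetrize the transport term, integrate the $\p_1^2$ term by parts, sort contributions by nonlinear degree) matches the paper's, and those routine pieces are fine. The gap is in your treatment of the worst term, the top-order part of $b\cdot\na\p_3^3 H$. You propose to add and subtract companion terms so as to complete the total derivative $b\cdot\na(\p_3^3 H_i\,\p_3^3 H_k)$; but the compensation this generates is $(\p_3^3\om_i-\p_3^3 H_i)\,b\cdot\na\p_3^3 H_k$ (plus its $i\leftrightarrow k$ twin), and the factor $b\cdot\na\p_3^3 H_k$ still contains $b_3\,\p_3^4 H_k$, i.e.\ fifth-order derivatives of $b$ such as $\p_3^5 b_2$, which are controlled neither by $\|b\|_{H^4}$ nor by $\int\|\na_h b\|_{H^4}^2$. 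Integrating the offending vertical derivative by parts merely transfers it to $\p_3^4\om-\p_3^4 H$, which is equally out of reach. Your stated justification --- that $\p_1$ is time-integrable for both $u$ and $b$ --- does not address this obstruction, which is an excess of \emph{vertical}, not horizontal, derivatives.

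The paper's symmetrization is different, and even then it is only half the story. It completes the \emph{mixed} total derivatives $b\cdot\na(\p_3^3\om_i\,\p_3^3 H_k)$ and $b\cdot\na(\p_3^3\om_k\,\p_3^3 H_i)$ (these integrate by parts harmlessly onto the coefficient $(b_j|\p_2u_j)$ via $\na\cdot b=0$), leaving the residual $\p_3^3H_i\,b\cdot\na\p_3^3\om_k+\p_3^3H_k\,b\cdot\na\p_3^3\om_i$, which still loses a derivative. The essential second step, absent from your proposal, is to recognize $b\cdot\na\om$ as a source term in the evolution equation of the current density $H$ and substitute $b\cdot\na\p_3^3\om_k=\p_t\p_3^3H_k+\p_3^3(u\cdot\na H_k)-\p_3^3\Delta_hH_k-\p_3^3\p_2\om_k+\cdots$; after summing the symmetric pair, the time derivatives combine into the exact derivative $\p_t\bigl(\p_3^3H_i\,\p_3^3H_k\bigr)$, which is then handled by shifting $\p_t$ onto $b_j$ (resp.\ $\p_2 u_j$) and invoking the $b$-equation, while every other substituted term carries a good horizontal derivative or an extra factor of the solution. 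Without this use of the $H$-equation the estimate does not close; this substitution, rather than symmetrization alone, is the crux of the lemma. (A minor further point: the $\p_2H$ coupling term needs no such machinery at all, since $\p_2\p_3^3H$ is already time-integrable through $\int\|\na_hb\|_{H^4}^2\,d\tau$ and is bounded directly.)
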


\begin{proof}[Proof of Lemma \ref{lem-assistant}]
	
	We will deal with the two terms above simultaneously. By the equation of $\omega$ in (\ref{5eq1}),
	\begin{equation}\label{6eq2}
		\begin{split}
			& - \p_t(\p_3^{3}\omega_i\p_3^{3}\omega_k)\\
			= \;& \p_3^{3}\omega_i \p_3^{3}\big( u\cdot\nabla\omega_k - \omega\cdot\nabla u_k -\p_1^2 \omega_k -b\cdot\nabla H_k +H \cdot\nabla b_k -\p_2 H_k \big) \\
			\;& + \p_3^{3}\omega_k \p_3^{3}\big( u\cdot\nabla\omega_i - \omega\cdot\nabla u_i -\p_1^2 \omega_i -b\cdot\nabla H_i +H \cdot\nabla b_i -\p_2 H_i \big).
		\end{split}
	\end{equation}
Multiplying (\ref{6eq2}) by $b_j$ or $\p_2 u_j$ and integrating in space, we can write
\begin{equation}\label{J12345}
- \int_{\mathbb{R}^3} \p_t(\p_3^{3}\omega_i\p_3^{3}\omega_k) (b_j|\p_2 u_j )dx
=J_1 + J_2 +J_3+ J_4+J_5,
\end{equation}
where the notation $(b_j|\p_2 u_j)$  stands for either $b_j$ or $ \p_2 u_j$ and we will use it throughout
the rest of the proof. The explicit expression for $J_1 \thicksim J_5$ is shown below. We first deal with $J_1$ given by
	\begin{equation}\nonumber
		J_1 = \int_{\mathbb{R}^3} \big[\p_3^{3}\omega_i \p_3^{3}( u\cdot\nabla\omega_k) + \p_3^{3}\omega_k \p_3^{3} ( u\cdot\nabla\omega_i) \big] (b_j|\p_2 u_j )dx.
	\end{equation}
The highest order norms of $\omega$ in $J_1$, labeled as $J_{11}$, can be dealt with using Lemma \ref{lem-anisotropic-est},
	\begin{equation}\nonumber
		\begin{split}
			J_{11} = &\int_{\mathbb{R}^3} \Big[ \p_3^{3}\omega_i u\cdot\nabla \p_3^{3}\omega_k + \p_3^{3}\omega_k u\cdot\nabla \p_3^{3}\omega_i \Big] (b_j|\p_2 u_j )dx \\
			=& \int_{\mathbb{R}^3} u\cdot\nabla \big( \p_3^{3}\omega_i\p_3^{3}\omega_k \big)(b_j|\p_2 u_j )dx \\
			=& - \int_{\mathbb{R}^3} \p_3^{3}\omega_i\p_3^{3}\omega_k u\cdot\nabla (b_j|\p_2 u_j) dx \\
			\lesssim & \; \| {\bf \p_1} \omega\|_{H^{3}} \|\omega\|_{H^{3}} \|{\bf \p_2} u\|_{H^1}^{\frac{1}{2}} \|u\|_{H^1}^{\frac{1}{2}} \|{\bf \p_2} (b|\p_2 u)\|_{H^2}^{\frac{1}{2}} \| (b|\p_2 u)\|_{H^2}^{\frac{1}{2}}.
		\end{split}
	\end{equation}
The remaining parts in $J_1$ can be written as
	\begin{equation}\nonumber
		\begin{split}
			J_{12} =& \sum_{l = 1}^{3} \mathcal{C}_{3}^l\int_{\mathbb{R}^3} \big[\p_3^{3}\omega_i \p_3^{l} u\cdot\nabla \p_3^{3-l} \omega_k + \p_3^{3}\omega_k \p_3^{l} u\cdot\nabla \p_3^{3-l} \omega_i \big](b_j|\p_2 u_j )dx \\
			= &\sum_{l = 1}^{2} \mathcal{C}_{3}^l\int_{\mathbb{R}^3} \big[\p_3^{3}\omega_i \p_3^{l} u\cdot\nabla \p_3^{3-l} \omega_k + \p_3^{3}\omega_k \p_3^{l} u\cdot\nabla \p_3^{3-l} \omega_i \big](b_j|\p_2 u_j )dx \\
			& + \int_{\mathbb{R}^3} \big[\p_3^{3}\omega_i \p_3^{3} u\cdot\nabla \omega_k + \p_3^{3}\omega_k \p_3^{3} u\cdot\nabla \omega_i \big](b_j|\p_2 u_j )dx,
		\end{split}
	\end{equation}
which is easily controlled by
	\begin{equation}\nonumber
		\begin{split}
			&\|\p_3^{3} \omega\|_{L^2}^\frac{1}{2}\|{\bf \p_1}\p_3^{3} \omega\|_{L^2}^\frac{1}{2}
			\|u\|_{H^4}^\frac{1}{2}\|{\bf \p_1} u\|_{H^4}^\frac{1}{2}
			\|u\|_{H^3}^\frac{1}{2}\|{\bf \p_2} u\|_{H^3}^\frac{1}{2} \| (b|\p_2 u)\|_{H^1}^{\frac{1}{2}} \| {\bf \p_2}(b|\p_2 u)\|_{H^1}^{\frac{1}{2}}.
		\end{split}
	\end{equation}
$J_2$ is defined as and bounded by
	\begin{equation}\nonumber
		\begin{split}
			J_2 &= -\int_{\mathbb{R}^3} \p_3^{3}\omega_i \p_3^{3}( \omega \cdot\nabla u_k)(b_j|\p_2 u_j )dx -\int_{\mathbb{R}^3}\p_3^{3}\omega_k \p_3^{3}( \omega \cdot\nabla u_i)(b_j|\p_2 u_j )dx\\
			& \lesssim\|\p_3^{3} \omega\|_{L^2}^\frac{1}{2}\|{\bf \p_1}\p_3^{3} \omega\|_{L^2}^\frac{1}{2} \|u\|_{H^4}^\frac{1}{2}\|{\bf \p_1}u\|_{H^4}^\frac{1}{2}
			\|u\|_{H^{2}}^\frac{1}{4}\|{\bf \p_2}u\|_{H^{2}}^\frac{1}{4}\|{\bf \p_3}u\|_{H^{2}}^\frac{1}{4}\|{\bf \p_2 \p_3}u\|_{H^{2}}^\frac{1}{4} \\
			&\quad \cdot \| (b|\p_2 u)\|_{L^2}^{\frac{1}{4}} \| {\bf \p_2}(b|\p_2 u)\|_{L^2}^{\frac{1}{4}}
			\| {\bf \p_3}(b|\p_2 u)\|_{L^2}^{\frac{1}{4}} \| {\bf \p_2 \p_3}(b|\p_2 u)\|_{L^2}^{\frac{1}{4}}.
		\end{split}
	\end{equation}
By integration by parts,
	\begin{equation}\nonumber
		\begin{split}
			J_3=&\int_{\mathbb{R}^3} \big[-\p_3^{3}\omega_i \p_3^{3} \p_1^2 \omega_k -\p_3^{3}\omega_k \p_3^{3} \p_1^2 \omega_i\big] (b_j|\p_2 u_j ) \;dx\\
			=& \;  2 \int_{\mathbb{R}^3} \p_1\p_3^{3}\omega_i \p_1 \p_3^{3}\omega_k (b_j|\p_2 u_j )dx \\
			&+ \int_{\mathbb{R}^3} \big[\p_3^{3}\omega_i \p_1 \p_3^{3}\omega_k + \p_3^{3}\omega_k \p_1 \p_3^{3}\omega_i\big]\p_1(b_j|\p_2 u_j )dx \\
			\lesssim & \; \| \p_1 \omega \|_{H^{3}}^2 \| (b|\p_2 u) \|_{H^2} + \|\omega\|_{H^{3}}\|\p_1\omega\|_{H^{3}} \|\p_1(b|\p_2 u)\|_{H^2}.
		\end{split}
	\end{equation}
$J_4$ is given by
$$ J_4 =\int_{\mathbb{R}^3} \big[ -\p_3^{3}\omega_i \p_3^{3}( b\cdot\nabla H_k) -\p_3^{3}\omega_k \p_3^{3} ( b\cdot\nabla H_i) \big] (b_j|\p_2 u_j) \; dx . $$
This is a difficult term. First we separate $J_4$ into two parts,
\begin{equation}\nonumber
	\begin{split}
J_4 = &\int_{\mathbb{R}^3}\big[ -\p_3^{3}\omega_i b\cdot\nabla \p_3^{3}H_k  - \p_3^{3}\omega_k b\cdot\nabla\p_3^{3} H_i\big] (b_j | \p_2 u_j)\; dx\\
& + \sum_{l = 1}^{3} \mathcal{C}_{3}^l\int_{\mathbb{R}^3} \big[-\p_3^{3}\omega_i \p_3^{l} b\cdot\nabla \p_3^{3-l} H_k - \p_3^{3}\omega_k \p_3^{l} b\cdot\nabla \p_3^{3-l} H_i \big](b_j|\p_2 u_j )dx\\
=& J_{41} + J_{42}.
	\end{split}
\end{equation}
$J_{42}$ can be bounded directly.  By Lemma 2.1,
\begin{equation}\label{j42b}
\begin{split}	
|J_{42}| {\lesssim} & \| \omega \|_{H^3}^{\frac{1}{2}}  \| {\bf \p_1} \omega \|_{H^3}^{\frac{1}{2}}\| H \|_{H^3}^{\frac{1}{2}}  \| {\bf \p_1} H \|_{H^3}^{\frac{1}{2}}\| b \|_{H^4}^{\frac{1}{2}}  \| {\bf \p_2} b \|_{H^4}^{\frac{1}{2}} \\
 &\cdot  \| (b| \p_2 u)\|_{H^1}^{\frac{1}{2}}  \| {\bf \p_2} (b| \p_2 u) \|_{H^1}^{\frac{1}{2}}.
	\end{split}
\end{equation}
The estimate for $J_{41}$ is at the core of this section. $J_1$, $J_2$ and $J_3$ are symmetric in the sense that, when we switch $i$ and $k$ in any one of these terms, they remain the same. As we have seen in the estimates above,
terms with symmetric structure are relatively easy to deal with.  However, $J_{41}$ is not symmetric
and we can no longer make easy
cancellations. To overcome this essential difficulty, we construct some artificial symmetry to take full advantage of the cancellations.
	\begin{equation}\label{6eq3}
		\begin{split}
			J_{41} = &\int_{\mathbb{R}^3}\big[ -\p_3^{3}\omega_i b\cdot\nabla \p_3^{3}H_k  - \p_3^{3}\omega_k b\cdot\nabla\p_3^{3} H_i\big] (b_j | \p_2 u_j)\; dx\\
			= & \int_{\mathbb{R}^3} \big[\p_3^{3} H_i b\cdot\nabla \p_3^{3}\omega_k + \p_3^{3} H_k b\cdot\nabla \p_3^{3}\omega_i \big] (b_j | \p_2 u_j) \; dx\\
			&-\int_{\mathbb{R}^3}\big[ b\cdot\nabla (\p_3^{3}\omega_i\p_3^{3} H_k) + b\cdot\nabla (\p_3^{3}\omega_k\p_3^{3} H_i)\big](b_j | \p_2 u_j) \; dx \\
			= & \; J_{411} + J_{412}.
		\end{split}
	\end{equation}
$J_{412}$  can be handled through integration by parts and Lemma \ref{lem-anisotropic-est},
	\begin{equation}\label{j412b}
		\begin{split}
			J_{412}= &  \int_{\mathbb{R}^3} \big[  \p_3^{3}\omega_i\p_3^{3} H_k +\p_3^{3}\omega_k\p_3^{3} H_i \big] b\cdot\nabla(b_j|\p_2 u_j )  dx  \\
			\lesssim & \; \|\p_3^{3} \omega\|_{L^2}^\frac{1}{2}\|{\bf \p_1} \p_3^{3} \omega\|_{L^2}^\frac{1}{2}\|\p_3^{3} H\|_{L^2}^\frac{1}{2}\|{\bf \p_1} \p_3^{3} H\|_{L^2}^\frac{1}{2}\|b\|_{H^1}^\frac{1}{2}\|{\bf \p_2}b\|_{H^1}^\frac{1}{2}\\
			&\quad \cdot \|(b|\p_2 u)\|_{H^2}^\frac{1}{2}\|{\bf \p_2}(b|\p_2 u)\|_{H^2}^\frac{1}{2}.
		\end{split}
	\end{equation}
$J_{411}$ is extremely difficult.  As our first step, we invoke the equation of $H$ in (\ref{5eq1})
	\begin{equation}\label{6eq4}
		\begin{split}
			& \p_t H_{k} + u\cdot\nabla H_k + \sum_{p=1}^3 \nabla u_p \times \p_p b_k -\Delta_h H_k \\
			& = b\cdot\nabla \omega_k
			+ \sum_{p=1}^3 \nabla b_p \times \p_p u_k +\p_2 \omega_k .
		\end{split}
	\end{equation}
where we have used the simple identities
	\begin{equation}\nonumber
		\begin{split}
			&\nabla \times (b\cdot\nabla u) = b\cdot\nabla \omega + \sum_{p=1}^3 \nabla b_p \times \p_p u, \\
			&\nabla \times (u\cdot\nabla b) = u \cdot\nabla H + \sum_{p=1}^3 \nabla u_p \times \p_p b.
		\end{split}
	\end{equation}
Applying $\p_3^{3}$ to \eqref{6eq4} then yields
	\begin{equation}\label{6eq5}
		\begin{split}
			& (\p_3^{3} H_k)_t + \p_3^{3} (u\cdot\nabla H_k) +\p_3^{3}(\sum_{p=1}^3 \nabla u_p \times \p_p b_k) -\p_3^{3}\Delta_h H_k\\
			=&  \; b\cdot\nabla \p_3^{3} \omega_k + \sum_{l = 1}^{3} \mathcal{C}_{3}^l\p_3^l b\cdot\nabla \p_3^{3-l}\omega_k + \p_3^{3}( \sum_{p=1}^3 \nabla b_p\times \p_p u_k ) + \p_3^{3} \p_2 \omega_k.
		\end{split}
	\end{equation}
Similarly,
	\begin{equation}\label{6eq6}
		\begin{split}
			& (\p_3^{3} H_i)_t + \p_3^{3} (u\cdot\nabla H_i) +\p_3^{3}(\sum_{p=1}^3 \nabla u_p \times \p_p b_i) -\p_3^{3}\Delta_h H_i\\
			=&  \; b\cdot\nabla \p_3^{3} \omega_i + \sum_{l = 1}^{3} \mathcal{C}_{3}^l\p_3^l b\cdot\nabla \p_3^{3-l}\omega_i + \p_3^{3}( \sum_{p=1}^3 \nabla b_p\times \p_p u_i ) + \p_3^{3} \p_2 \omega_i .
		\end{split}
	\end{equation}
	Multiplying \eqref{6eq5} by $\p_3^{3} H_i$ and \eqref{6eq6} by $\p_3^{3} H_k$ , and summing them up, there holds
	\begin{equation}\label{6eq7}
		\begin{split}
			& \p_3^{3} H_i b\cdot\nabla \p_3^{3}\omega_k + \p_3^{3} H_k b\cdot\nabla \p_3^{3}\omega_i \\
			=\; &  \big( \p_3^{3}H_i \p_3^{3}H_k \big)_t  +\p_3^{3}H_i \p_3^{3} (u\cdot\nabla H_k) + \p_3^{3}H_k \p_3^{3} (u\cdot\nabla H_i)\\
			& + \p_3^{3}H_i\p_3^{3}(\sum_{p=1}^3 \nabla u_p \times \p_p b_k) +  \p_3^{3}H_k\p_3^{3}(\sum_{p=1}^3 \nabla u_p \times \p_p b_i)\\
			& - \p_3^{3}H_i\p_3^{3}( \sum_{p=1}^3 \nabla b_p\times \p_p u_k )-\p_3^{3}H_k\p_3^{3}( \sum_{p=1}^3 \nabla b_p\times \p_p u_i ) \\
			& -\p_3^{3}H_i  \sum_{l = 1}^{3} \mathcal{C}_{3}^l\p_3^l b\cdot\nabla \p_3^{3-l}\omega_k -\p_3^{3}H_k  \sum_{l = 1}^{3} \mathcal{C}_{3}^l\p_3^l b\cdot\nabla \p_3^{3-l}\omega_i\\
			& - \p_3^{3}H_i\p_3^{3}\Delta_h H_k -\p_3^{3}H_k\p_3^{3}\Delta_h H_i - \p_3^{3}H_i\p_3^{3} \p_2 \omega_k -\p_3^{3}H_k\p_3^{3} \p_2 \omega_i .
		\end{split}
	\end{equation}
We can then replace the nonlinear terms in $J_{411}$ by the right hand side of \eqref{6eq7}.
This complicated substitution generates many more terms, but this important process converts some of seemingly
impossible terms into other terms that can be bounded suitably. This is what we would call artificial cancellation through substitutions.  Next we give the estimate for the terms corresponding to the right hand side of \eqref{6eq7}. The first term we come across is
$$
K_1 =\int_{\mathbb{R}^3} \big( \p_3^{3}H_i \p_3^{3}H_k \big)_t (b_j | \p_2 u_j) \; dx.
$$
Using integration by parts and invoking the equation of $b$ in \eqref{mhd}, we can rewrite the term containing $b_j$ as follows,
	\begin{equation}\nonumber
		\begin{split}
			K_1 = & \; \frac{d}{dt}\int_{\mathbb{R}^3} \big( \p_3^{3}H_i \p_3^{3}H_k \big) b_j \; dx - \int_{\mathbb{R}^3} \big( \p_3^{3}H_i \p_3^{3}H_k \big) \p_t b_j \; dx\\
			= & \; \frac{d}{dt}\int_{\mathbb{R}^3} \big( \p_3^{3}H_i \p_3^{3}H_k \big) b_j \; dx \\
			& - \int_{\mathbb{R}^3} \big( \p_3^{3}H_i \p_3^{3}H_k \big) (-u\cdot \nabla b_j + \Delta_h b_j + b\cdot \nabla u_j + \p_2 u_j) \; dx.
		\end{split}
	\end{equation}
By Lemma \ref{lem-anisotropic-est}, the last line in the equality above can be bounded by
	\begin{equation}\nonumber
		\begin{split}
			&\|\p_3^{3}H\|_{L^2}\|\p_1 \p_3^{3}H\|_{L^2}\|u\|_{H^2}^\frac{1}{2} \|\p_2 u\|_{H^2}^\frac{1}{2}\|b\|_{H^2}^\frac{1}{2} \|\p_2 b\|_{H^2}^\frac{1}{2}
			 \\
			& + \|\p_3^{3}H\|_{L^2}^\frac{1}{2}\|{\bf \p_1} \p_3^{3}H\|_{L^2}^\frac{1}{2}\|\p_3^{3}H\|_{L^2}^\frac{1}{2}\|{\bf \p_2} \p_3^{3}H\|_{L^2}^\frac{1}{2}\\
				&\quad \times \Big(\|\Delta_h b\|_{L^2}^\frac{1}{2}\|{\bf \p_3} \Delta_h b\|_{L^2}^\frac{1}{2}
+\|\p_2 u\|_{L^2}^\frac{1}{2}\|{\bf \p_3} \p_2 u\|_{L^2}^\frac{1}{2}\Big).
		\end{split}
	\end{equation}
The estimate for the term containing  $\p_2 u_j$ is similar.
	We move on to the second part
	$$
	K_2 = \int_{\mathbb{R}^3} \big[ \p_3^{3}H_i \p_3^{3} (u\cdot\nabla H_k) + \p_3^{3}H_k \p_3^{3} (u\cdot\nabla H_i) \big] (b_j | \p_2 u_j) \; dx.
	$$
It can be handled similarly as $J_{11}$.  Due to its symmetric property,
	\begin{equation}\nonumber
		\begin{split}
			&\int_{\mathbb{R}^3} \Big[ \p_3^{3}H_i u\cdot\nabla \p_3^{3} H_k + \p_3^{3}H_k u\cdot\nabla \p_3^{3}H_i \Big] (b_j|\p_2 u_j )dx \\
			=& \int_{\mathbb{R}^3} u\cdot\nabla \big( \p_3^{3}H_i\p_3^{3}H_k \big)(b_j|\p_2 u_j )dx \\
			=& - \int_{\mathbb{R}^3} \p_3^{3}H_i\p_3^{3}H_k u\cdot\nabla (b_j|\p_2 u_j) dx \\
			\lesssim & \; \| {\bf \p_1} H\|_{H^{3}} \|H\|_{H^{3}} \| {\bf \p_2} u\|_{H^{1}}^{\frac{1}{2}} \|u\|_{H^1}^{\frac{1}{2}} \| {\bf \p_2} (b|\p_2 u)\|_{H^{2}}^{\frac{1}{2}} \| (b|\p_2 u)\|_{H^{2}}^{\frac{1}{2}}
		\end{split}
	\end{equation}
	and
	\begin{equation}\nonumber
		\begin{split}
			&\sum_{l = 1}^3 \mathcal{C}_3^l \int_{\mathbb{R}^3} \Big[ \p_3^{3}H_i \p_3^l u\cdot\nabla \p_3^{3-l} H_k + \p_3^{3}H_k \p_3^l u\cdot\nabla \p_3^{3-l}H_i \Big] (b_j|\p_2 u_j )dx \\
			\lesssim & \; \|\p_3^3 H\|_{L^2}^\frac{1}{2}\|{\bf \p_2}\p_3^3 H\|_{L^2}^\frac{1}{2} \|H\|_{H^3}^\frac{1}{2}\|{\bf \p_2}H\|_{H^3}^\frac{1}{2} \|u\|_{H^4}^\frac{1}{2}\|{\bf \p_1}u\|_{H^4}^\frac{1}{2} \|(b|\p_2 u)\|_{H^1}^\frac{1}{2}\|{\bf \p_1}(b|\p_2 u)\|_{H^1}^\frac{1}{2}.
		\end{split}
	\end{equation}
The next term $K_3$ contains six parts of (\ref{6eq7}) and is defined as follows.
	\begin{equation}\nonumber
		\begin{split}
			K_3 = & \int_{\mathbb{R}^3} \Big(\p_3^{3}H_i\p_3^{3}(\sum_{p=1}^3 \nabla u_p \times \p_p b_k) +  \p_3^{3}H_k\p_3^{3}(\sum_{p=1}^3 \nabla u_p \times \p_p b_i)\\
			& - \p_3^{3}H_i\p_3^{3}( \sum_{p=1}^3 \nabla b_p\times \p_p u_k )-\p_3^{3}H_k\p_3^{3}( \sum_{p=1}^3 \nabla b_p\times \p_p u_i ) \\
			& -\p_3^{3}H_i  \sum_{l = 1}^{3} \mathcal{C}_{3}^l\p_3^l b\cdot\nabla \p_3^{3-l}\omega_k -\p_3^{3}H_k  \sum_{l = 1}^{3} \mathcal{C}_{3}^l\p_3^l b\cdot\nabla \p_3^{3-l}\omega_i \Big) \cdot (b_j | \p_2 u_j) \; dx.
		\end{split}
	\end{equation}
By Lemma \ref{lem-anisotropic-est}, it's easy to derive
	\begin{equation}\nonumber
	\begin{split}
		K_3 \lesssim & \; \|H\|_{H^{3}}^\frac{1}{2}\|{\bf \p_1} H\|_{H^{3}}^\frac{1}{2}
		\big(\|u\|_{{ H^{4}}}^\frac{1}{2}\|{\bf \p_1} u\|_{{H^{4}}}^\frac{1}{2}
		{\|b\|_{H^{3}}^\frac{1}{2} \|{\bf \p_2}b\|_{H^{3}}^\frac{1}{2}}\\
		& + \|b\|_{ {H^{4}} }^\frac{1}{2}\|{\bf \p_1} b\|_{ {H^{4}} }^\frac{1}{2}
		{\|u\|_{H^{3}}^\frac{1}{2} \|{\bf \p_2}u\|_{H^{3}}^\frac{1}{2}} \big)
		\cdot \|(b|\p_2u)\|_{H^1}^\frac{1}{2}\|{\bf \p_2}(b|\p_2u)\|_{H^1}^\frac{1}{2}.
	\end{split}
\end{equation}
The last four parts of (\ref{6eq7}) are included in $K_4$,
	\begin{equation}\nonumber
		\begin{split}
			K_4 {=} & \int_{\mathbb{R}^3} \Big[   - \p_3^{3}H_i\p_3^{3}\Delta_h H_k -\p_3^{3}H_k\p_3^{3}\Delta_h H_i \Big] (b_j|\p_2 u_j )dx \\
			& + \int_{\mathbb{R}^3} \Big[- \p_3^{3}H_i\p_3^{3} \p_2 \omega_k -\p_3^{3}H_k\p_3^{3} \p_2 \omega_i \Big] (b_j|\p_2 u_j )dx \\
			\lesssim &\;  \|\na_h H\|_{H^{3}}^2 \|(b |\p_2 u)\|_{H^2} + \|H\|_{H^{3}}\|\na_h H\|_{H^{3}} \|\na_h(b|\p_2 u)\|_{H^2} \\
			& + \| \p_2 \p_3^{3}H \|_{L^2} \| \p_3^{3} \omega  \|_{L^2}^{\frac{1}{2}} \| {\bf \p_1} \p_3^{3} \omega  \|_{L^2}^{\frac{1}{2}} \| (b|\p_2 u )\|_{H^1}^{\frac{1}{2}} \| {\bf \p_2}(b|\p_2 u )\|_{H^1}^{\frac{1}{2}} \\
			& + \|\p_3^{3}H \|_{L^2}^{\frac{1}{2}} \|{\bf \p_2}\p_3^{3}H \|_{L^2}^{\frac{1}{2}}  \| \p_3^{3} \omega  \|_{L^2}^{\frac{1}{2}} \| {\bf \p_1} \p_3^{3} \omega  \|_{L^2}^{\frac{1}{2}}   \| \p_2(b|\p_2 u ) \|_{L^2}^{\frac{1}{2}} \| {\bf \p_3}\p_2(b|\p_2 u ) \|_{L^2}^{\frac{1}{2}}.
		\end{split}
	\end{equation}
We have estimated all the terms corresponding to the right hand side of \eqref{6eq7} and thus obtained a suitable upper bound for $J_{411}$ in (\ref{6eq3}). Integrating the upper bounds on $K_1$ through $K_4$ in time yields
	\begin{equation}\nonumber
		\int_0^t |J_{411}| d\tau \lesssim {\mathscr{E}^{\frac{3}{2}}(0)}+\mathscr{E}^{\frac{3}{2}}(t) + \mathscr{E}^2(t).
	\end{equation}
Together with the estimate (\ref{j412b}) for $J_{412}$, we conclude that
	\begin{equation}\nonumber
	\int_0^t |J_{41}| d\tau \lesssim { \mathscr{E}^{\frac{3}{2}}(0)}+ \mathscr{E}^{\frac{3}{2}}(t) + \mathscr{E}^2(t).
\end{equation}
$J_{42}$ has been estimated before in (\ref{j42b}). Thus,
		\begin{equation}\nonumber
		\int_0^t |J_{4}| d\tau \lesssim { \mathscr{E}^{\frac{3}{2}}(0)}+ \mathscr{E}^{\frac{3}{2}}(t) + \mathscr{E}^2(t).
		\end{equation}
We deal with the last term,
	\begin{equation}\nonumber
		\begin{split}
			J_5 = \int_{\mathbb{R}^3} \big[\p_3^3 \omega_i \p_3^3 (H \cdot \nabla b_k - \p_2 H_k) + \p_3^3 \omega_k \p_3^3 (H \cdot \nabla b_i - \p_2 H_i) \big] (b_j | \p_2 u_j) \; dx.
		\end{split}
	\end{equation}
By Lemma \ref{lem-anisotropic-est},
	\begin{equation}\nonumber
		\begin{split}
			J_5 \lesssim &\|\omega\|_{H^3}^\frac{1}{2}\|{\bf \p_1} \omega\|_{H^3}^\frac{1}{2} \|b\|_{H^4}^\frac{1}{2}\|{\bf \p_1} b\|_{H^4}^\frac{1}{2}\|b\|_{H^3}^\frac{1}{2}\|{\bf \p_2} b\|_{H^3}^\frac{1}{2}\|(b|\p_2 u)\|_{H^1}^\frac{1}{2}\|{\bf \p_2} (b|\p_2 u)\|_{H^1}^\frac{1}{2}\\
&+{\|\p_3^3\omega\|_{L^2}^{\frac {1}{2}}\|{\bf\p_1 }\p_3^3\omega\|_{L^2}^{\frac {1}{2}} \|\p_3^3\p_2H\|_{L^2} \|(b|\p_2 u)\|_{H^1}^\frac{1}{2}\|{\bf \p_2} (b|\p_2 u)\|_{H^1}^\frac{1}{2}}.
\end{split}
\end{equation}
Integrating in time yields
\begin{equation}\nonumber
	\int_0^t |J_{5}| d\tau \lesssim  \mathscr{E}^{\frac{3}{2}}(t) + \mathscr{E}^2(t).
\end{equation}
Integrating (\ref{J12345}) in time over $[0, t]$ and combining all the bounds for $J_1$ through $J_5$, we are led to the conclusion of Lemma \ref{lem-assistant}.
\end{proof}

\vskip .1in
We are now ready to prove Proposition \ref{lem-important}.

\begin{proof}[Proof of Proposition \ref{lem-important}] Recall that the goal here is to bound the interaction terms $\mathcal{W}(t)$ defined by
$$ \mathcal{W}^{ijk}(t) \triangleq  \int_{\mathbb{R}^3} \p_3^{3}\omega_i \p_2u_j \p_3^{3}\omega_k \; dx, \quad i,j,k \in \{1,2,3\}.$$
We replace $\p_2 u_j$ by the equation of $b_j$ in (\ref{mhd}), there is
\begin{equation*}\label{W}
\begin{split}
\mathcal{W}^{ijk}(t)=\; & \int_{\mathbb{R}^3} \p_3^{3}\omega_i \Big[\p_t b_j +u\cdot\nabla b_j -\Delta_h b_j -b\cdot\nabla u_j\Big] \p_3^{3}\omega_k \; dx \\
  = \; & \frac{d}{dt} \int_{\mathbb{R}^3} \p_3^{3}\omega_i b_j \p_3^{3}\omega_k dx- \int_{\mathbb{R}^3} b_j \p_t(\p_3^{3}\omega_i \p_3^{3}\omega_k)dx \\
  &+ \int_{\mathbb{R}^3} \p_3^{3}\omega_i u\cdot\nabla b_j \p_3^{3}\omega_kdx - \int_{\mathbb{R}^3} \p_3^{3}\omega_i b\cdot\nabla u_j \p_3^{3}\omega_kdx \\
  & -\int_{\mathbb{R}^3} \p_3^{3}\omega_i \Delta_h b_j \p_3^{3}\omega_kdx \\
  =\; & \mathcal{W}_1^{ijk}+\mathcal{W}_2^{ijk}+\mathcal{W}_3^{ijk}+\mathcal{W}_4^{ijk}+\mathcal{W}_5^{ijk} .
\end{split}
\end{equation*}
By H\"{o}lder's inequality,
\begin{equation}\nonumber
\int_0^t W_1^{ijk}(\tau) \;d\tau \lesssim \mathscr{E}^\frac{3}{2}(0) +  \mathscr{E}^\frac{3}{2}(t).
\end{equation}
$\mathcal{W}_2^{ijk}$ is an extremely difficult term. Fortunately,
we have bounded it in Lemma  \ref{lem-assistant},
\begin{equation}\nonumber
\left|\int_0^t \mathcal{W}_2^{ijk}(\tau) d\tau \right| \lesssim {\mathscr{E}^{\frac{3}{2}}(0)}+ \mathscr{E}^{\frac{3}{2}}(t) + \mathscr{E}^2(t).
\end{equation}
By Lemma \ref{lem-anisotropic-est},
\begin{equation}\nonumber
\begin{split}
\mathcal{W}_3^{ijk} \lesssim & \; \|\p_3^{3}\omega_i \|_{L^2}^{\frac{1}{2}}\|{\bf \p_1}\p_3^{3}\omega_i \|_{L^2}^{\frac{1}{2}} \|\p_3^{3}\omega_k \|_{L^2}^{\frac{1}{2}}\|{\bf \p_1}\p_3^{3}\omega_k \|_{L^2}^{\frac{1}{2}}\\
&\cdot {\|u\|_{H^1}^{\frac{1}{2}} \|{\bf \p_2}u\|_{H^1}^{\frac{1}{2}}
	\| \nabla b_j\|_{H^1}^{\frac{1}{2}} \| {\bf \p_2}\nabla b_j\|_{H^1}^{\frac{1}{2}} }	
\end{split}
\end{equation}
and
\begin{equation}\nonumber
\begin{split}
\mathcal{W}_4^{ijk} \lesssim & \; \|\p_3^{3}\omega_i \|_{L^2}^{\frac{1}{2}}\|{\bf \p_1}\p_3^{3}\omega_i \|_{L^2}^{\frac{1}{2}} \|\p_3^{3}\omega_k \|_{L^2}^{\frac{1}{2}}\|{\bf \p_1}\p_3^{3}\omega_k \|_{L^2}^{\frac{1}{2}} \\
&\cdot {\|b\|_{H^1}^{\frac{1}{2}} \|{\bf \p_2}b\|_{H^1}^{\frac{1}{2}}
	\| \nabla u_j\|_{H^1}^{\frac{1}{2}} \| {\bf \p_2}\nabla u_j\|_{H^1}^{\frac{1}{2}} }.
\end{split}
\end{equation}
Integrating in time yields
\begin{equation}\nonumber
	\int_0^t |\mathcal{W}_3^{ijk}(\tau)| d\tau, \,\, \int_0^t |\mathcal{W}_4^{ijk}(\tau)| d\tau\lesssim   \mathscr{E}^2(t).
\end{equation}
We divide $\mathcal{W}_5^{ijk}$ into two parts,
\begin{equation}\nonumber
\begin{split}
  \mathcal{W}_5^{ijk} = & \int_{\mathbb{R}^3} \p_1 b_j \p_1 (\p_3^{3}\omega_i \p_3^{3}\omega_k)dx - \int_{\mathbb{R}^3} \p_2^2 b_j \p_3^{3}\omega_i\p_3^{3}\omega_kdx \\
  = & \; \mathcal{W}_{51}^{ijk} + \mathcal{W}_{52}^{ijk}.
\end{split}
\end{equation}
The estimate for $\mathcal{W}_{51}^{ijk}$ is easy,
\begin{equation}\nonumber
\begin{split}
\mathcal{W}_{51}^{ijk} \lesssim \|\p_1 b\|_{H^2} \|\p_1 \omega\|_{H^{3}} \|\omega\|_{H^{3}}.
\end{split}
\end{equation}
For $\mathcal{W}_{52}^{ijk}$, we replace $\p_2 b_j$ via the equation of {$u_j$} in \eqref{mhd},
\begin{equation}\nonumber
  \mathcal{W}_{52}^{ijk} = - \int_{\mathbb{R}^3} \p_2 \big( \p_tu_j +u\cdot\nabla u_j -\p_1^2u_j -b\cdot\nabla b_j + \p_j P\big) \p_3^{3}\omega_i\p_3^{3}\omega_kdx .
\end{equation}
By Lemma \ref{lem-anisotropic-est} and Sobolev's inequality,
\begin{equation}\nonumber
	\begin{split}
		& - \int_{\mathbb{R}^3} \p_2 \big(u\cdot\nabla u_j -\p_1^2u_j -b\cdot\nabla b_j + \nabla_j P\big) \p_3^{3}\omega_i\p_3^{3}\omega_kdx \\
		\lesssim & \; \|\p_1 u\|_{H^4} \|\p_1 \omega\|_{H^{3}} \|\omega\|_{H^3} + \|\p_3^{3}\omega \|_{L^2}\|{\bf \p_1}\p_3^{3}\omega \|_{L^2}  \big(\|u\|_{H^3} \|{\bf \p_2}u\|_{H^3}  + \|b\|_{H^3}\|{\bf \p_2}b\|_{H^3}\big)\\
		& + \|\p_2 \nabla P\|_{H^1}^\frac{1}{2} \|{\bf \p_2} \p_2 \nabla P\|_{H^1}^\frac{1}{2}\|\p_3^{3}\omega \|_{L^2}^\frac{3}{2}\|{\bf \p_1}\p_3^{3}\omega \|_{L^2}^\frac{1}{2}.
	\end{split}
\end{equation}
Taking the divergence of the velocity equation in (\ref{mhd}) yields a representation of the pressure  $P$,
$$
P = \sum_{i,j=1}^3 (-\Delta)^{-1} \p_i \p_j (u_i u_j - b_i b_j).
$$
Using the fact that Riesz operators are bounded on $L^q$ for $1< q < \infty$ and the third inequality in Lemma \ref{lem-anisotropic-est}, we have
\begin{equation}\nonumber
	\begin{split}
		\|\p_2 P\|_{L^2} \lesssim & \sum_{v = u, b} \|\p_2(v \otimes v)\|_{L^2} \lesssim \sum_{v = u, b}\|\p_2 v\|_{L^2}^\frac{1}{2} \|{\bf \p_1} \p_2 v\|_{L^2}^\frac{1}{2}\|v\|_{H^1}^\frac{1}{2} \|{\bf \p_2} v\|_{H^1}^\frac{1}{2}, \\
		\|\p_2^2 P\|_{L^2} \lesssim & \sum_{v = u, b} \|\p_2^2(v \otimes v)\|_{L^2} \lesssim \sum_{v = u, b}\big(\|\p_2 v\|_{L^4}^2+  \|\p_2^2 v\|_{L^2}^\frac{1}{2}\|{\bf \p_1} \p_2^2 v\|_{L^2}^\frac{1}{2}
		\|v\|_{H^1}^\frac{1}{2}\|{\bf \p_2} v\|_{H^1}^\frac{1}{2}\big).
	\end{split}
\end{equation}
Hence,
\begin{equation}\nonumber
  \begin{split}
    & - \int_{\mathbb{R}^3} \p_2 \big(u\cdot\nabla u_j -\p_1^2u_j -b\cdot\nabla b_j + \nabla_j P\big) \p_3^{3}\omega_i\p_3^{3}\omega_kdx \\
    \lesssim & \; \|\p_1 u\|_{H^4} \|\p_1 \omega\|_{H^{3}} \|\omega\|_{H^3} + \|\p_3^{3}\omega \|_{L^2}\|{\bf \p_1}\p_3^{3}\omega \|_{L^2}  \big(\|u\|_{H^3} \|{\bf \p_2}u\|_{H^3}  + \|b\|_{H^3}\|{\bf \p_2}b\|_{H^3}\big)\\
    & + \sum_{v = u, b} \big(\|\p_2 v\|_{H^2}^\frac{1}{2} \|{\bf \p_1} \p_2 v\|_{H^2}^\frac{1}{2}\|v\|_{H^3}^\frac{1}{2} \|{\bf \p_2} v\|_{H^3}^\frac{1}{2}\big)^\frac{1}{2} \\
    & \cdot \big(\|\p_2 v\|_{H^3}^2+  \|\p_2^2 v\|_{H^2}^\frac{1}{2}\|{\bf \p_1} \p_2^2 v\|_{H^2}^\frac{1}{2}
\|v\|_{H^3}^\frac{1}{2}\|{\bf \p_2} v\|_{H^3}^\frac{1}{2}\big)^\frac{1}{2} \|\p_3^{3}\omega \|_{L^2}^\frac{3}{2}\|{\bf \p_1}\p_3^{3}\omega \|_{L^2}^\frac{1}{2}.
  \end{split}
\end{equation}
This implies that we only have one term left in the estimate for $W_{52}^{ijk}$, namely
$$- \int_{\mathbb{R}^3} \p_2\p_tu_j\;  \p_3^{3}\omega_i\p_3^{3}\omega_kdx.$$
We shift the time derivative via the equation
$$-\int_{\mathbb{R}^3} \p_2\p_t u_j \p_3^{3}\omega_i\p_3^{3}\omega_kdx = -\frac{d}{dt}\int_{\mathbb{R}^3} \p_2 u_j \p_3^{3}\omega_i\p_3^{3}\omega_kdx + \int_{\mathbb{R}^3} \p_2 u_j \p_t(\p_3^{3}\omega_i\p_3^{3}\omega_k)dx.$$
The last term above is a difficult term and is bounded via Lemma \ref{lem-assistant}. Integrating in time and invoking
the upper bounds, we find
$$
	\int_0^t |\mathcal{W}_5^{ijk}(\tau)| d\tau  \lesssim \mathscr{E}^\frac{3}{2}(0) +  \mathscr{E}^\frac{3}{2}(t)
	+ \mathscr{E}^2(t).
$$
This completes the proof of Proposition \ref{lem-important}.
\end{proof}

\vskip .3in
\section{Proof of Theorem \ref{main}}
\label{proof-thm}

This section completes the proof of Theorem \ref{main}, which is achieved by applying the bootstrapping argument
to the energy inequalities obtained in the previous sections.

\begin{proof}[Proof of Theorem \ref{main}] As aforementioned in the introduction, the local well-posedness
of (\ref{mhd}) in $H^4$ follows from a standard procedure (see, e.g., \cite{MaBe}) and our
attention is exclusively focused on the global bound {of $H^4$-norms}. This is accomplished
by the bootstrapping argument. The key components are the two energy inequalities established previously
in Sections \ref{e2b} and \ref{proof-high},
\begin{align}
	&	{\mathscr{E}_{2}(t)}\le C_1\, \mathscr{E}(0) + C_2\,\mathscr{E}_1(t) +  C_3\,\mathscr{E}_1^{\frac32}(t)
	+ C_4 \,\mathscr{E}_2^{\frac32}(t), \label{ineq1}\\
	&{\mathscr{E}_{1}(t)} \le C_5\, \mathscr{E}(0) + C_6 \,\mathscr{E}^{\frac32}(0) +  C_7\, \mathscr{E}^{\frac32}(t) + C_8\, \mathscr{E}^2 (t), \label{ineq2}
\end{align}
where $\mathscr{E} = \mathscr{E}_{1} +\mathscr{E}_{2}$.  Adding (\ref{ineq2}) to $1/(2C_2)$ of (\ref{ineq1}) yields, for a constant $C_0>0$ and for any $t>0$,
\begin{equation}\label{en}
		{\mathscr{E} (t)}\le C_0\, \mathscr{E}(0) +C_0 \,\mathscr{E}^{\frac32}(0) + C_0 \,\mathscr{E}^{\frac32}(t)
		+ C_0\, \mathscr{E}^2 (t).
\end{equation}	
Without loss of generality, we assume $C_0\ge 1$. Applying a bootstrapping argument to (\ref{en}) then implies that, if $\|(u_0, b_0)\|_{H^4}$ is
sufficiently small, say
\begin{equation}\label{inc}
\mathscr{E}(0) \le \frac1{128 C^3_0}\quad \mbox{or}\quad  \|(u_0, b_0)\|_{H^4} \le \epsilon :=\frac1{\sqrt{128 C_0^3}},
\end{equation}
then, for any $t>0$,
$$
	\mathscr{E}(t)\le \frac1{32 C^2_0}, \quad \mbox{especially}\quad \|(u(t), b(t))\|_{H^4} \le 2 \sqrt{C_0}\, \epsilon.
$$
In fact, if we make the ansatz that
\begin{equation}\label{ans}
\mathscr{E}(t)\le \frac1{16 C^2_0}
\end{equation}
and insert (\ref{ans}) in (\ref{en}), we  obtain
$$
\mathscr{E}(t) \le C_0\, \mathscr{E}(0) +C_0 \,\mathscr{E}^{\frac32}(0) + \frac12 \mathscr{E}(t),
$$
which, together with (\ref{inc}), implies
\begin{equation}\label{con}
\mathscr{E}(t) \le\frac1{32 C^2_0}.
\end{equation}
The bootstrapping argument then implies that (\ref{con}) actually holds for all $t>0$. This completes the proof of Theorem \ref{main}.
\end{proof}

\vskip .3in
\section*{Acknowledgement}

Lin is partially supported by NSFC under Grant 11701049. Wu is partially supported by the National Science Foundation of the
United States under grant DMS 2104682 and the AT\&T Foundation at Oklahoma State
University. Zhu is partially supported by NSFC under Grant 11801175.

\vskip .4in

\end{document}